\newtheorem{thm}{Theorem}[section]
\newtheorem{defi}{Definition}[section]
\newtheorem{lem}{Lemma}[section]
\newtheorem{cor}{Corollary}[section]
\newtheorem{prop}{Proposition}[section]
\theoremstyle{definition}
\newtheorem{rem}{Remark}[section]
\newtheorem{exmp}{Example}[section]
\newtheorem{note}{Note}[section]
\newcommand{\be}{\begin{equation}}
\newcommand{\ee}{\end{equation}}
\newcommand{\bea}{\begin{eqnarray}}
\newcommand{\eea}{\end{eqnarray}}
\newcommand{\beb}{\begin{eqnarray*}}
\newcommand{\eeb}{\end{eqnarray*}}
\numberwithin{equation}{section}
\begin{document}
\title[On $\mathcal{I}$-convergence of sequences of functions and uniform conjugacy]{On $\mathcal{I}$-convergence of sequences of functions and uniform conjugacy}

\author[A. K. Banerjee and N. Hossain]{$^1$Amar Kumar Banerjee and $^2$Nesar Hossain}

\address{$^{1,2}$Department of Mathematics, The University of Burdwan, Golapbag, Burdwan - 713104, West Bengal, India.}

\email{$^1$akbanerjee1971@gmail.com ;akbanerjee@math.buruniv.ac.in}
\email{$^2$nesarhossain24@gmail.com}

\subjclass[2020]{40A35,  40A30, 54A20, 40A05}
\keywords{Ideal, filter, lattice, $\mathcal{I^*}\text{-}\alpha$-uniform equal convergence, $\mathcal{I^*}\text{-}\alpha$-strong uniform equal convergence, $\mathcal{I}$-exhaustiveness, uniform conjugacy. }

\maketitle

\begin{abstract}
 In this paper we introduce the notion of $\mathcal{I^*}\text{-}\alpha$-uniform equal convergence  and $\mathcal{I^*}\text{-}\alpha$-strong uniform equal convergence of sequences of functions and then investigate some lattice properties of $\Phi ^{\mathcal{I^*}\text{-}\alpha\text{-}u.e.}$ and $\Phi ^{\mathcal{I^*}\text{-}\alpha \text{-}s.u.e.}$, the classes of all functions which are $\mathcal{I^*}\text{-}\alpha$-uniform equal limits and $\mathcal{I^*}\text{-}\alpha$-strong uniform equal limits of sequences of functions respectively obtained from a class of functions $\Phi$.  We have also shown that $\mathcal{I}$-exhaustiveness, $\mathcal{I}$-uniform and  $\mathcal{I}\text{-}\alpha$-  convergence of sequences of functions are preserved under uniform conjugacy.
\end{abstract}

\section{\textbf{Introduction}}
The two kinds of generalizations of statistical convergence \cite{Fast, Steinhaus} were introduced by Kostyrko et al. \cite{Kostyrko Salat Wilczynski} which was named as $\mathcal{I}$ and $\mathcal{I^*}$-convergence based on the structure of ideals of the set $\mathbb{N}$ of positive integers. As a natural consequence over the years, some significant investigations on this convergence were studied in many directions in different spaces \cite{Banerjee Banerjee 2016, Banerjee Banerjee 2018, Banerjee Paul, Das Kostyrko Wilczynski Malik, Lahiri Das}. 
Besides uniform convergence and pointwise convergence,  different types of converegence of sequences of real valued functions were studied and their significant properties were  developed analogously. For example the ideas of discrete, equal \cite{Csaszar Laczkovich 1975, Csaszar Laczkovich 1979} (also known as quasinormal convergence \cite{Bukovska}), uniform equal, uniform discrete \cite{Papanastassiou 2002}, $\alpha$-convergence (in the literature, also known as continuous convergence \cite{Stoilov}), $\alpha$-equal, $\alpha$-uniform equal, $\alpha$-strong uniform equal convergence \cite{Das Papanastassiou} were introduced by several authors. Later,  the idea of   $\mathcal{I^*}$-uniform discrete, $\mathcal{I^*}$-uniform equal, $\mathcal{I^*}$-strong uniform equal, $\mathcal{I}$-strong equal convergence of sequences of real valued functions were studied in \cite{Das Dutta, Das Dutta Pal, Sengupta}

In this paper we investigate some lattice properties of the classes of all functions which are $\mathcal{I^*}\text{-}\alpha$-uniform equal limits and $\mathcal{I^*}\text{-}\alpha$-strong uniform equal limits of sequences of functions following the investigations of \cite{Das Das}. Tian and Chen (\cite{Tian  Chen}) introduced the notion of uniform conjugacy for sequences of maps.  We  show that different types of $\mathcal{I}$- convergence of sequences of functions are preserved under uniform conjugacy.

\section{\textbf{Preliminaries}}
Throughout the paper $\mathbb{N}$ and $\mathbb{R}$  denote the set of all positive integers and the set of all real numbers respectively. Now we recall some basic definitions and notations. 
\begin{defi}
A family $\mathcal{I}\subset 2^Y$ of subsets of a non empty set $Y$ is said to be an ideal if the following conditions hold.\\
$(i)$ $A, B\in \mathcal{I}\Rightarrow A\cup B \in \mathcal{I}$, \\
$(ii)$ $A\in \mathcal{I}, B\subset A\Rightarrow B\in \mathcal{I}$.
\end{defi} 

From the definition it follows that $\phi \in \mathcal{I}$. $\mathcal{I}$ is called non trivial if $Y\notin \mathcal{I}$ and proper if $\mathcal{I}\neq \{\phi \}$. An ideal $\mathcal{I}$ in $Y$ is said to be an admissible ideal if $\{x\}\in \mathcal{I}$ for each $x\in Y$.
Let $\mathcal{I}$ is non trivial proper ideal in $Y$ then the family of sets $\mathcal F(\mathcal{I})=\{Y\setminus A : A \in \mathcal{I}\} $ is a filter on $Y$ which  is called the filter associated with the ideal $\mathcal{I}$. Throughout the paper $\mathcal{I}$ will stand for an admissible ideal of $\mathbb{N}$. 
A sequence  $\{x_n \}_{n\in \mathbb{N}}$ of real numbers is said to be $\mathcal{I}$-convergent to $x\in \mathbb{R}$ if for each $\varepsilon > 0$ the set $A(\varepsilon)=\{n\in \mathbb{N}: |x_n - x|\geq \varepsilon \}\in \mathcal{I}$ \cite{Kostyrko Salat Wilczynski}.
The sequence  $\{x_n \}_{n\in \mathbb{N}}$ of real numbers is said to be $\mathcal{I^*}$-convergent to $x\in \mathbb{R}$ if there exists a set $M\in \mathcal F(\mathcal{I}), M =\{m_1 < m_2 < \cdots <m_k < \cdots \}$ such that $\lim _{k\to \infty} x_{m_k}=x$ \cite{Kostyrko Salat Wilczynski}.

We denote by $|A|$ to stand for the cardinality of the set $A$. Let $X$ be a non empty set. By a function on $X$, we mean a real valued function on $X$. Let $\Phi$ be an arbitrary class of functions defined on $X$. Then we have the following definitions.

 \begin{defi}\cite{Csaszar Laczkovich 1979}
$(a)$ $\Phi$ is called a lattice if $\Phi$ contains all constant functions and $f, g\in \Phi$ implies $max(f, g)\in \Phi$ and $min(f, g)\in \Phi.$ \\
$(b)$  $\Phi$ is called a translation lattice if it is a lattice and $f\in \Phi , c\in \mathbb{R}$ implies $f+c\in \Phi.$\\
$(c)$ $\Phi$ is called a congruence  lattice if it is a translation lattice and $f\in \Phi$ implies $-f\in \Phi.$\\
$(d)$ $\Phi$ is a weakly affine lattice if it is a congruence lattice and there is a set $C\subset (0, \infty)$ such that $C$ is not bounded and $f\in \Phi, c\in C$ implies $cf\in \Phi.$ \\
$(e)$ $\Phi$ is called an affine lattice if it is a congruence lattice and $f\in \Phi, c\in \mathbb{R}$ implies $cf\in \Phi.$ \\
$(f)$ $\Phi$ is called a subtractive lattice if it is a lattice and $f, g\in \Phi$ implies $f-g\in \Phi.$ \\
$(g)$ $\Phi$ is called an ordinary class if it is a subtractive lattice, $f, g\in \Phi$ implies $f.g\in \Phi$ and $f\in \Phi, f(x)\neq 0,$ for all $x\in X$ implies $\frac{1}{f}\in \Phi.$
 \end{defi}

\begin{defi}\cite{Papanastassiou 2002}
A sequence of functions $\{f_n \}_{n\in \mathbb{N}}$ in $\Phi$ is said to converge uniformly equally to a function $f$ in $\Phi$ (written as $f_n \xrightarrow{u.e} f$) if there exists a sequence $\{\varepsilon_n \}_{n\in \mathbb{N}}$ of positive reals converging to zero and a natural number $n_0$ such that the cardinality of the set $\{n\in \mathbb{N}: |f_n (x)-f(x)|\geq \varepsilon_n\}$ is at most $n_0$, for each $x\in X$. 
\end{defi}

\begin{defi}\cite{Papanastassiou 2002}
A sequence of functions $\{f_n \}_{n\in \mathbb{N}}$ in $\Phi$ is said to converge strongly uniformly equally to a function $f$ in $\Phi$ (written as $f_n \xrightarrow{s.u.e} f$) if there exists a sequence $\{\varepsilon_n \}_{n\in \mathbb{N}}$ of positive reals with $\Sigma_{n=1}^{\infty} \varepsilon _n < \infty$ and $n_0 \in \mathbb{N}$ such that
$|\{n\in \mathbb{N}: |f_n (x)-f(x)|\geq \varepsilon_n\}|\leq n_0$, for each $x\in X$.
\end{defi}

\begin{defi}\cite{Das Dutta Pal}
A sequence of functions $\{f_n \}_{n\in \mathbb{N}}$ is said to converge $\mathcal{I^*}$-uniformly equally to a function $f$  (written as $f_n \xrightarrow{\mathcal{I^*}\text{-}u.e.} f$) if there exist a sequence $\{\varepsilon_n \}_{n\in \mathbb{N}}$ of positive reals converging to zero, $M=M(\{\varepsilon_n \})\in \mathcal F(\mathcal{I})$ and $k(\{\varepsilon_n \})\in \mathbb{N}$ such that  $|\{n\in M : |f_n (x)-f(x)|\geq \varepsilon_n\}|$ is at most $k=k(\{\varepsilon_n \}$ for all $x\in X$. 
\end{defi}

\begin{defi}\cite{Das Dutta}
A sequence of functions $\{f_n \}_{n\in \mathbb{N}}$ is said to converge $\mathcal{I^*}$-strongly uniformly equally to a function $f$  (written as $f_n \xrightarrow{\mathcal{I^*}\text{-}s.u.e.} f$) if there exists a sequence $\{\varepsilon_n \}_{n\in \mathbb{N}}$ of positive reals with  $\Sigma_{n=1}^{\infty} \varepsilon _n < \infty$, a set $M=M(\{\varepsilon_n \})\in \mathcal F(\mathcal{I})$ and $k(\{\varepsilon_n \})\in \mathbb{N}$ such that  $|\{n\in M : |f_n (x)-f(x)|\geq \varepsilon_n\}|$ is at most $k=k(\{\varepsilon_n \}$ for all $x\in X$.
\end{defi}

Now we recall the definitions of $\alpha$-convergence, $\alpha$-uniform equal and $\alpha$-strong uniform equal convergence.

\begin{defi} (see \cite{Das Papanastassiou})
Let $(X, d)$ be a metric space and $f, f_n : X\rightarrow \mathbb{R}$, $n\in \mathbb{N}$. Then   $\{f_n \}_{n\in \mathbb{N}}$ $\alpha$-converges to $f$ (written as $ f_n\xrightarrow{\alpha}f$) if for any $x\in X$ and for any sequence $\{x_n\}_{n\in \mathbb{N}}$ of points of $X$ converging to $x$, $(f_n(x_n))$ converges to $f(x)$.
\end{defi}
In the literature, $\alpha$-convergence is also known as continuous convergence \cite{Stoilov}.

\begin{defi}\cite{Das Papanastassiou}
Let $(X, d)$ be a metric space and $f, f_n : X\rightarrow \mathbb{R}$, $n\in \mathbb{N}$. Then   $\{f_n \}_{n\in \mathbb{N}}$  is said to converge $\alpha$-uniformly equally to a function $f$  (written as $f_n \xrightarrow{\alpha \text{-}u.e} f$) if there exists a sequence $\{\varepsilon_n \}_{n\in \mathbb{N}}$ of positive reals converging to zero and a natural number $n_0$ such that $|\{n\in \mathbb{N}: |f_n (x_n)-f(x)|\geq \varepsilon_n\}| \leq n_0$ for each $x\in X$ and $x_n \rightarrow x$.
\end{defi}

\begin{defi}\cite{Das Papanastassiou}
Let $(X, d)$ be a metric space and $f, f_n : X\rightarrow \mathbb{R}$, $n\in \mathbb{N}$. Then   $\{f_n \}_{n\in \mathbb{N}}$  is said to converge $\alpha$-strongly uniformly equally to a function $f$  (written as $f_n \xrightarrow{\alpha \text{-}s.u.e} f$) if there exists a convergent series $\Sigma_{n=1}^{\infty}\varepsilon_n $ of positive reals  and a natural number $n_0$ such that $|\{n\in \mathbb{N}: |f_n (x_n)-f(x)|\geq \varepsilon_n\}| \leq n_0$ for every $x\in X$ and $x_n \rightarrow x$.
\end{defi}

\section{\textbf{$\mathcal{I^*}\text{-}\alpha\text{-}$uniform equal  and $\mathcal{I^*}\text{-}\alpha\text{-}$strong uniform equal convergence }}
Throughout the paper $X$ stands for a metric space and $f, f_n$ are real valued functions defined on $X$, $\Phi ^{\mathcal{I^*}\text{-}\alpha\text{-}u.e.}$ and $\Phi ^{\mathcal{I^*}\text{-}\alpha \text{-}s.u.e.}$  stand for the classes of all functions on $X$ which are $\mathcal{I^*}\text{-}\alpha$-uniform equal limits and $\mathcal{I^*}\text{-}\alpha$-strong uniform equal limits of sequences of functions respectively obtained from a class of functions $\Phi$. Then we first introduce the following definitions.

\begin{defi}
 A sequence $\{f_n\}_{n\in\mathbb{N}}$ is said to converge $\mathcal{I^*}\text{-}\alpha$-uniformly equally to $f$ (written as $f_n \xrightarrow{\mathcal{I^*}\text{-}\alpha \text{-}u.e.} f$) if there exists a sequence $(\varepsilon _n)_{n\in\mathbb{N}}$ of positive reals converging to zero, a set $M\in \mathcal{F(I)}$ and $n_0\in\mathbb{N}$ such that $|\{n\in M : |f_n (x_n)-f(x)|\geq \varepsilon _n \}|\leq n_0$  for each $x\in X$ and $(x_n)_{n\in M} \rightarrow x$.
\end{defi}

\begin{defi}
 A sequence $\{f_n\}_{n\in\mathbb{N}}$ is said to converge $\mathcal{I^*}\text{-}\alpha$-strongly uniformly equally to $f$ (written as $f_n \xrightarrow{\mathcal{I^*}\text{-}\alpha \text{-}s.u.e.} f$) if there exists a convergent series $\Sigma_{n=1}^{\infty}\varepsilon_n $ of positive reals,  a set $M\in \mathcal{F(I)}$ and $n_0\in\mathbb{N}$ such that $|\{n\in M : |f_n (x_n)-f(x)|\geq \varepsilon _n \}|\leq n_0$  for every $x\in X$ and $(x_n)_{n\in M} \rightarrow x$.
\end{defi}

\begin{rem}
From the definition we see that $f_n \xrightarrow{\mathcal{I^*}\text{-}\alpha \text{-}s.u.e.} f$ implies $f_n \xrightarrow{\mathcal{I^*}\text{-}\alpha \text{-}u.e.} f$. But the converse is not true as shown in  the following example. 
\end{rem}

\begin{exmp}
Let $\mathcal{I}$ be a non trivial proper admissible ideal of $\mathbb{N}$. Then we have an infinite set $B\in \mathcal{F}(\mathcal{I})$. Let $f, f_n : \mathbb{R}\rightarrow \mathbb{R}, n\in \mathbb{N}$ be functions defined by $f_n(x)=\begin{cases}
\frac{1}{n}\ if\ n\in B\\
0 \ if\ n\notin B
\end{cases}$ for each $x\in \mathbb{R}$ and $f(x)=0$ for all $x\in \mathbb{R}$. Suppose $\varepsilon _n = \frac{1}{\sqrt{n}}$. Then $\varepsilon _n \rightarrow 0$, as $n\rightarrow \infty$.  Now we have $|\{n\in B: |f_n (x_n)-f(x)|\geq \varepsilon _n\}|=|\{n\in B: \frac{1}{n}\geq \frac{1}{\sqrt{n}}\}|$ is at most one. Therefore $f_n \xrightarrow{\mathcal{I^*}\text{-}\alpha \text{-}u.e.} f$. But it is not $\mathcal{I^*}\text{-}\alpha$-strong uniform equal convergent. If possible let  $f_n \xrightarrow{\mathcal{I^*}\text{-}\alpha \text{-}s.u.e.} f$. Then there exists a convergent series $\Sigma_{n=1}^{\infty}\varepsilon_n $ of positive reals  and $n_0\in\mathbb{N}$ such that $|\{n\in B: |f_n (x_n)-f(x)|\geq \varepsilon _n \}|\leq n_0$ i.e. $|\{n\in \mathbb{N}: \frac{1}{n}\geq \varepsilon _n \}|\leq n_0$. Then there is $m\in\mathbb{N}$ such that $ \frac{1}{n}< \varepsilon _n$ for all $n>m$, which is a contradiction as $\Sigma \frac{1}{n}$ is divergent. Therefore $\{f_n\}_{n\in\mathbb{N}}$ does not converge $\mathcal{I^*}\text{-}\alpha$-strongly uniformly equally to the function $f$. 
\end{exmp}

We now observe the following equivalent conditions for the $\mathcal{I^*}\text{-}\alpha \text{-}u.e.$ and $\mathcal{I^*}\text{-}\alpha \text{-}s.u.e.$ convergences.

\begin{prop}
Let $f, f_n : X\rightarrow \mathbb{R}, n\in \mathbb{N}$. If $(\varepsilon _n)_{n\in \mathbb{N}}$ and $(\lambda _n)_{n\in \mathbb{N}}$ are two zero sequences of positive reals such that $0<\varepsilon _n \leq \lambda _n$ for every $n\in \mathbb{N}$ and $(\varepsilon _n)_{n\in \mathbb{N}}$ witnesses the $\mathcal{I^*}\text{-}\alpha \text{-}u.e.$ convergence then $(\lambda _n)_{n\in \mathbb{N}}$ also witnesses the same.
\end{prop}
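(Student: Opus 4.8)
The plan is to exploit the monotonicity of the threshold: since $\varepsilon_n \le \lambda_n$ for every $n$, any index $n$ at which the deviation exceeds the larger threshold $\lambda_n$ automatically exceeds the smaller one $\varepsilon_n$, so the exceptional index set shrinks when we pass to $(\lambda_n)$. First I would unpack what it means for $(\varepsilon_n)$ to witness the $\mathcal{I^*}\text{-}\alpha\text{-}u.e.$ convergence: by the defining condition there exist a set $M \in \mathcal{F(I)}$ and an $n_0 \in \mathbb{N}$ such that for every $x \in X$ and every sequence $(x_n)_{n\in M}$ with $(x_n)_{n\in M} \to x$ one has $|\{n\in M : |f_n(x_n)-f(x)|\ge \varepsilon_n\}| \le n_0$.

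The key step is to keep the very same $M$ and $n_0$ and to compare the two exceptional index sets. Fix an arbitrary $x \in X$ and an arbitrary sequence $(x_n)_{n\in M} \to x$. Because $0 < \varepsilon_n \le \lambda_n$, the inequality $|f_n(x_n)-f(x)| \ge \lambda_n$ forces $|f_n(x_n)-f(x)| \ge \varepsilon_n$, whence
$$\{n\in M : |f_n(x_n)-f(x)|\ge \lambda_n\} \subseteq \{n\in M : |f_n(x_n)-f(x)|\ge \varepsilon_n\}.$$
Taking cardinalities and using the bound supplied by the $(\varepsilon_n)$-witness gives $|\{n\in M : |f_n(x_n)-f(x)|\ge \lambda_n\}| \le n_0$.

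Finally, since $(\lambda_n)_{n\in\mathbb{N}}$ is by hypothesis a zero sequence of positive reals, the sequence $(\lambda_n)$ together with this same $M$ and $n_0$ meets every requirement in the definition; as $x$ and the sequence $(x_n)_{n\in M}\to x$ were arbitrary, $(\lambda_n)$ witnesses the $\mathcal{I^*}\text{-}\alpha\text{-}u.e.$ convergence, which completes the argument. I expect essentially no obstacle here beyond the observation that the witnessing data $M$ and $n_0$ transfer unchanged; the only point that warrants a word of care is that the set inclusion must hold uniformly in $x$ and in the chosen approximating sequence, and this is immediate because the comparison $\varepsilon_n \le \lambda_n$ is taken pointwise in $n$ and is independent of $x$ and of $(x_n)$.
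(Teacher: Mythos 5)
Your proof is correct and follows exactly the same route as the paper's: keep the witnessing set $M\in\mathcal{F(I)}$ and bound $n_0$, observe the inclusion $\{n\in M : |f_n(x_n)-f(x)|\ge \lambda_n\}\subseteq \{n\in M : |f_n(x_n)-f(x)|\ge \varepsilon_n\}$ forced by $\varepsilon_n\le\lambda_n$, and conclude the cardinality bound. Your added remark that the inclusion holds uniformly in $x$ and in the approximating sequence is a fine (if implicit in the paper) point of care, but otherwise the two arguments are identical.
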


\begin{proof}
By the given condition there exists a set $M\in \mathcal{F(I)}$ and $n_0\in\mathbb{N}$ such that $|\{n\in M : |f_n (x_n)-f(x)|\geq \varepsilon _n \}|\leq n_0$  for each $x\in X$ and $(x_n)_{n\in M} \rightarrow x$. Since  $\{n\in M : |f_n (x_n)-f(x)|\geq \lambda _n \}\subset \{n\in M : |f_n (x_n)-f(x)|\geq \varepsilon _n \}$, therefore $|\{n\in M : |f_n (x_n)-f(x)|\geq \lambda _n \}|\leq n_0$  for each $x\in X$ and $(x_n)_{n\in M} \rightarrow x$. 
\end{proof}

\begin{rem}
If  $\Sigma_{n=1}^{\infty}\varepsilon_n $ and $\Sigma_{n=1}^{\infty}\lambda_n $ are two convergent series of positive reals such that $0<\varepsilon _n \leq \lambda _n$ for every $n\in \mathbb{N}$ then, by similar arguments, the above result also holds for $\mathcal{I^*}$-$\alpha$-strong uniform equal convergences.
\end{rem}

\begin{lem}\label{lem1}
Let $ f_n : X\rightarrow \mathbb{R}, n\in \mathbb{N}$. If $f_n\xrightarrow{\mathcal{I^*}\text{-}\alpha \text{-} u.e.} 0$ then $f_{n}^2 \xrightarrow{\mathcal{I^*}\text{-}\alpha \text{-} u.e.} 0$.
\end{lem}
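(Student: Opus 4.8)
The plan is to recycle the data that already witnesses $f_n \xrightarrow{\mathcal{I^*}\text{-}\alpha\text{-}u.e.} 0$ and simply replace the tolerance sequence by its square. First I would unpack the hypothesis: there exist a zero sequence $(\varepsilon_n)_{n\in\mathbb{N}}$ of positive reals, a set $M\in\mathcal{F(I)}$, and $n_0\in\mathbb{N}$ such that $|\{n\in M : |f_n(x_n)|\geq \varepsilon_n\}|\leq n_0$ for every $x\in X$ and every sequence $(x_n)_{n\in M}\to x$ (here the limit function is $0$, so $|f_n(x_n)-f(x)|=|f_n(x_n)|$).

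Next I would propose $(\varepsilon_n^2)_{n\in\mathbb{N}}$ as the witnessing tolerance for $f_n^2$, keeping the same $M$ and the same bound $n_0$. Since $\varepsilon_n>0$ and $\varepsilon_n\to 0$, the sequence $(\varepsilon_n^2)_{n\in\mathbb{N}}$ is again a sequence of positive reals converging to zero, so it is an admissible tolerance sequence in the sense of the definition. The crux of the argument is the elementary observation that the squaring map $t\mapsto t^2$ is strictly increasing on $[0,\infty)$: for every $n$, every $x$, and every admissible $(x_n)$ we have $|f_n^2(x_n)-0|=|f_n(x_n)|^2$, and $|f_n(x_n)|^2\geq \varepsilon_n^2$ holds if and only if $|f_n(x_n)|\geq \varepsilon_n$. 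Consequently the two exceptional sets coincide exactly, $\{n\in M : |f_n^2(x_n)|\geq \varepsilon_n^2\}=\{n\in M : |f_n(x_n)|\geq \varepsilon_n\}$.

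From this set identity the cardinality bound transfers verbatim: for each $x\in X$ and each $(x_n)_{n\in M}\to x$ we get $|\{n\in M : |f_n^2(x_n)|\geq \varepsilon_n^2\}|\leq n_0$, which is precisely the requirement for $f_n^2 \xrightarrow{\mathcal{I^*}\text{-}\alpha\text{-}u.e.} 0$ with witnesses $(\varepsilon_n^2)$, $M$, and $n_0$. I do not expect any genuine obstacle here; the entire content is the monotonicity of squaring on the nonnegative reals, which makes the exceptional sets identical rather than merely comparable, so no appeal to Proposition~3.1 or any further enlargement of the tolerance is needed. The only point deserving a line of care is to record explicitly that $(\varepsilon_n^2)$ remains a positive zero sequence, so that it qualifies as a legitimate witness.
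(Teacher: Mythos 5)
Your proposal is correct and follows essentially the same route as the paper: both use $(\varepsilon_n^2)$ as the new tolerance with the same $M$ and $n_0$, exploiting the monotonicity of squaring on $[0,\infty)$ (the paper states the exceptional-set comparison as an inclusion, you sharpen it to an equality, but this is immaterial). Nothing is missing.
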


 \begin{proof}
By the condition, there exists a sequence $(\varepsilon _n)_{n\in \mathbb{N}}$ of positive reals converging to zero, a set $M\in \mathcal{F(I)}$ and $n_0\in\mathbb{N}$ such that $|\{n\in M : |f_n (x_n)|\geq \varepsilon _n \}|\leq n_0$  for each $x\in X$ and $(x_n)_{n\in M} \rightarrow x$.  Therefore $|\{n\in M : |f_{n}^2 (x_n)|\geq \varepsilon _{n}^2 \}|\leq |\{n\in M : |f_n (x_n)|\geq \varepsilon _n \} |\leq n_0$ for each $x\in X$ and $(x_n)_{n\in M} \rightarrow x$ and hence $f_{n}^2 \xrightarrow{\mathcal{I^*}\text{-}\alpha \text{-}u.e.} 0$.
 \end{proof}

\begin{lem}\label{lem2}
Let $f, f_n : X\rightarrow \mathbb{R}, n\in \mathbb{N}$. If $f$ is a non zero constant function and $f_n\xrightarrow{\mathcal{I^*}\text{-}\alpha \text{-} u.e.} f$ then $f_n .f\xrightarrow{\mathcal{I^*}\text{-}\alpha \text{-} u.e.} f^2$.
\end{lem}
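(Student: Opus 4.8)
The plan is to exploit the fact that $f$ is constant, so that multiplication by $f$ is merely multiplication by a fixed nonzero scalar, and then to rescale the witnessing null sequence by that scalar. First I would write $f\equiv c$ for some $c\in\mathbb{R}\setminus\{0\}$ and invoke the hypothesis $f_n\xrightarrow{\mathcal{I^*}\text{-}\alpha \text{-} u.e.} f$ to obtain a sequence $(\varepsilon_n)_{n\in\mathbb{N}}$ of positive reals with $\varepsilon_n\to 0$, a set $M\in\mathcal{F(I)}$, and $n_0\in\mathbb{N}$ such that $|\{n\in M : |f_n(x_n)-f(x)|\geq\varepsilon_n\}|\leq n_0$ for every $x\in X$ and every sequence $(x_n)_{n\in M}\to x$.

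The key observation is a pointwise identity. For every $x\in X$, every admissible sequence $(x_n)_{n\in M}\to x$, and every $n$, we have $(f_n\cdot f)(x_n)=c\,f_n(x_n)$ and $f^2(x)=c^2=c\,f(x)$, whence $|(f_n\cdot f)(x_n)-f^2(x)|=|c|\,|f_n(x_n)-f(x)|$. I would then set $\lambda_n:=|c|\,\varepsilon_n$. Because $|c|>0$ is a fixed constant and $\varepsilon_n\to 0$, the sequence $(\lambda_n)_{n\in\mathbb{N}}$ is again a sequence of positive reals converging to zero, hence a legitimate candidate for a witnessing null sequence for the product.

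Finally, the identity makes the two exceptional sets coincide exactly: $\{n\in M : |(f_n\cdot f)(x_n)-f^2(x)|\geq\lambda_n\}=\{n\in M : |c|\,|f_n(x_n)-f(x)|\geq |c|\,\varepsilon_n\}=\{n\in M : |f_n(x_n)-f(x)|\geq\varepsilon_n\}$, whose cardinality is at most $n_0$. Thus the same $M$ and $n_0$, together with $(\lambda_n)_{n\in\mathbb{N}}$, witness $f_n\cdot f\xrightarrow{\mathcal{I^*}\text{-}\alpha \text{-} u.e.} f^2$.

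There is no serious obstacle; the argument is essentially a scaling. The one point deserving care is the role of the hypotheses. Constancy of $f$ is what allows the factor $|c|$ to be pulled out uniformly in $x$; for a non-constant $f$ the product would mix the values $f(x_n)$ and $f(x)$ and this clean factorization would break down. The assumption $c\neq 0$ is precisely what guarantees that $\lambda_n=|c|\,\varepsilon_n$ is a sequence of \emph{strictly positive} reals, as demanded by the definition of $\mathcal{I^*}\text{-}\alpha\text{-}u.e.$ convergence.
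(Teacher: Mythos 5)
Your proposal is correct and follows essentially the same route as the paper: both pull the constant $|c|$ out of $|(f_n\cdot f)(x_n)-f^2(x)|$ and use the rescaled null sequence $|c|\,\varepsilon_n$ as the new witness, with the same $M$ and $n_0$. Your write-up merely makes explicit the points the paper leaves implicit (positivity and convergence of $|c|\,\varepsilon_n$, and where constancy of $f$ is used).
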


\begin{proof}
 Let $f(x) = c$ for each $x\in X$ where $c$ is a non zero constant. Since $f_n\xrightarrow{\mathcal{I^*}\text{-}\alpha \text{-} u.e.} f$ then there exist a sequence $(\varepsilon _n)_{n\in \mathbb{N}}$ of positive reals converging to zero,  $M\in \mathcal{F(I)}$ and  $n_0\in \mathbb{N}$ such that $|\{n\in M : |f_n (x_n)-f(x)|\geq \varepsilon _n \}|\leq n_0$ for each $x\in X$ and $(x_n)_{n\in M} \rightarrow x$. 
Therefore $|\{n\in M: |(f_n .f) (x_n)-(f.f)(x)|\geq |c| \varepsilon _n \}|= |\{n\in M : |f_n (x_n)-f(x)|\geq \varepsilon _n \}|\leq n_0$ for each $x\in X$ and $(x_n)_{n\in M} \rightarrow x$. It follows $f_n .f\xrightarrow{\mathcal{I^*}\text{-}\alpha \text{-} u.e.} f^2$.
\end{proof}

\begin{note}
By similar techniques we can prove that if $f$ is bounded and $f_n\xrightarrow{\mathcal{I^*}\text{-}\alpha \text{-} u.e.} f$ then $f_n .f\xrightarrow{\mathcal{I^*}\text{-}\alpha \text{-} u.e.} f^2$ and also $f_n ^2\xrightarrow{\mathcal{I^*}\text{-}\alpha \text{-} u.e.} f^2$.
\end{note}

\begin{thm}\label{thm3.1}
If  $f_n\xrightarrow{\mathcal{I^*}\text{-}\alpha \text{-} u.e.} f$,  $g_n\xrightarrow{\mathcal{I^*}\text{-}\alpha \text{-} u.e.} g$ and $a,b \in \mathbb{R}$, then  $a f_n + b g_n\xrightarrow{\mathcal{I^*}\text{-}\alpha \text{-} u.e.} af+bg$.
\end{thm}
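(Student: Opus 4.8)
The plan is to merge the two hypotheses over a common member of the filter and then dominate the linear combination by the triangle inequality. First I would unpack the assumptions: $f_n\xrightarrow{\mathcal{I^*}\text{-}\alpha \text{-} u.e.} f$ supplies a zero sequence $(\varepsilon_n)_{n\in\mathbb{N}}$ of positive reals, a set $M_1\in\mathcal{F(I)}$ and $n_1\in\mathbb{N}$ with $|\{n\in M_1 : |f_n(x_n)-f(x)|\geq\varepsilon_n\}|\leq n_1$, while $g_n\xrightarrow{\mathcal{I^*}\text{-}\alpha \text{-} u.e.} g$ supplies $(\delta_n)_{n\in\mathbb{N}}$, $M_2\in\mathcal{F(I)}$ and $n_2\in\mathbb{N}$ with the analogous bound $n_2$. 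Since $\mathcal{F(I)}$ is a filter it is closed under finite intersections, so I would take $M:=M_1\cap M_2\in\mathcal{F(I)}$ as the witnessing set, put $n_0:=n_1+n_2$, and choose the controlling sequence $\eta_n:=(|a|+1)\varepsilon_n+(|b|+1)\delta_n$. This $\eta_n$ is a sequence of strictly positive reals converging to zero, the added $+1$'s being inserted only to guarantee positivity in case $a$ or $b$ vanishes.

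The one point that needs care, and which I regard as the main (if mild) obstacle, is that the two hypotheses are phrased for sequences indexed by $M_1$ and $M_2$, whereas I must apply them to a sequence $(x_n)_{n\in M}$ indexed by the smaller set $M$. To bridge this gap I would, given $x\in X$ and $(x_n)_{n\in M}\to x$, extend the sequence to one indexed by $M_1$ by setting $x_n:=x$ for every $n\in M_1\setminus M$; because the original sequence converges to $x$ and the adjoined terms are exactly $x$, the extended sequence still converges to $x$ along $M_1$, so the hypothesis for $f_n$ applies and gives $|\{n\in M_1 : |f_n(x_n)-f(x)|\geq\varepsilon_n\}|\leq n_1$. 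Restricting to $M\subseteq M_1$, where the extension agrees with the original sequence, yields $|\{n\in M : |f_n(x_n)-f(x)|\geq\varepsilon_n\}|\leq n_1$, and the identical device applied over $M_2$ gives $|\{n\in M : |g_n(x_n)-g(x)|\geq\delta_n\}|\leq n_2$.

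With these two restricted bounds in hand the conclusion follows from a pointwise triangle estimate. If an index $n\in M$ satisfies both $|f_n(x_n)-f(x)|<\varepsilon_n$ and $|g_n(x_n)-g(x)|<\delta_n$, then
\[
|(af_n+bg_n)(x_n)-(af+bg)(x)|\leq |a|\,|f_n(x_n)-f(x)|+|b|\,|g_n(x_n)-g(x)|<|a|\varepsilon_n+|b|\delta_n\leq\eta_n .
\]
Contrapositively, $\{n\in M : |(af_n+bg_n)(x_n)-(af+bg)(x)|\geq\eta_n\}\subseteq\{n\in M : |f_n(x_n)-f(x)|\geq\varepsilon_n\}\cup\{n\in M : |g_n(x_n)-g(x)|\geq\delta_n\}$, so its cardinality is at most $n_1+n_2=n_0$ for every $x\in X$ and every $(x_n)_{n\in M}\to x$. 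Thus $\eta_n$, $M$ and $n_0$ witness $af_n+bg_n\xrightarrow{\mathcal{I^*}\text{-}\alpha \text{-} u.e.} af+bg$, which is the assertion. Apart from the indexing adjustment described above, every step is routine, the only structural ingredients being the closure of $\mathcal{F(I)}$ under finite intersection and the triangle inequality.
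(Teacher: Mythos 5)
Your proof is correct and follows essentially the same route as the paper's: intersect the two filter sets to get $M_1\cap M_2\in\mathcal{F(I)}$, take $n_0=n_1+n_2$, combine the two witnessing null sequences linearly, and use the triangle inequality (in contrapositive form) to place the bad set for $af_n+bg_n$ inside the union of the two bad sets. In fact your write-up is slightly more careful than the paper's on two minor points it glosses over: the restriction of sequences indexed by $M_1$, $M_2$ to the smaller index set $M$, and the positivity of the witnessing sequence when $a$ or $b$ is zero (the paper's choice $\varepsilon_n=a\sigma_n+b\rho_n$ should really be $|a|\sigma_n+|b|\rho_n$, and degenerates if $a=b=0$).
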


\begin{proof}
By the condition there exist sequences $(\sigma _n)_{n\in \mathbb{N}}$ and $(\rho _n)_{n\in N}$ of positive reals both converging to zero, $M_1, M_2 \in \mathcal{F}(\mathcal{I})$ and $n_1 , n_2 \in \mathbb{N}$ such that $|\{n\in M_1 : |a f_n (x_n)-a f(x)|\geq |a| \sigma _n \}|\leq n_1$ for each $x\in X$ and $(x_n)_{M_1} \rightarrow x$ and $|\{n\in M_2 : |b g_n (x_n)-b g(x)|\geq |b| \rho _n \}|\leq n_2$ for each $x\in X$ and $(x_n)_{M_2} \rightarrow x$. Let $\varepsilon _n = a\sigma _n + b\rho _n$. Also let $B_1=\{n\in M_1 : |a f_n (x_n)-a f(x)|\geq |a| \sigma _n \}$, $B_2=\{n\in M_2 : |b g_n (x_n)-b g(x)|\geq |b| \rho _n \}$ and $B=\{n\in M_1\cap M_2 : |(a f_n + b g_n) (x_n)-(a f+b g)(x)|\geq  \varepsilon _n \}$. Now for $n\in B$, we have  $\varepsilon_n\leq |(a f_n + b g_n) (x_n)-(a f+b g)(x)|\leq |a f_n (x_n)-a f(x)|+|b g_n (x_n)-b g(x)|$. So $B\subset \{n\in\mathbb{N}:|a f_n (x_n)-a f(x)|\geq \varepsilon_n\}\cup \{n\in \mathbb{N}:  |b g_n (x_n)-b g(x)|\geq \varepsilon_n\}\subset B_1\cup B_2$.  Therefore $|\{n\in M_1\cap M_2 : |(a f_n + b g_n) (x_n)-(a f+b g)(x)|\geq  \varepsilon _n \}|\leq n_1+n_2$ for each $x\in X$ and $(x_n)_{M_1\cap M_2} \rightarrow x$, where $M_1\cap M_2\in \mathcal{F(I)}$ and $\lim_{n\to \infty}\varepsilon_n=0$. Hence the theorem follows.
\end{proof}

\begin{cor}\label{cor1}
If  $f_n\xrightarrow{\mathcal{I^*}\text{-}\alpha \text{-} u.e.} f$,  $g_n\xrightarrow{\mathcal{I^*}\text{-}\alpha \text{-} u.e.} g$ then $f_n+g_n\xrightarrow{\mathcal{I^*}\text{-}\alpha \text{-} u.e.} f+g$ and $f_n-g_n\xrightarrow{\mathcal{I^*}\text{-}\alpha \text{-} u.e.} f-g$.
\end{cor}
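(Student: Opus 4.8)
The plan is to obtain both assertions as immediate specializations of Theorem \ref{thm3.1}, which already establishes the stronger statement that $af_n + bg_n \xrightarrow{\mathcal{I^*}\text{-}\alpha \text{-} u.e.} af + bg$ for \emph{arbitrary} real scalars $a$ and $b$. The hypotheses of the corollary, namely $f_n \xrightarrow{\mathcal{I^*}\text{-}\alpha \text{-} u.e.} f$ and $g_n \xrightarrow{\mathcal{I^*}\text{-}\alpha \text{-} u.e.} g$, are precisely the hypotheses of the theorem, so no fresh convergence argument is required; it suffices to choose the scalars appropriately.

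First I would invoke Theorem \ref{thm3.1} with $a = b = 1$. This gives $1\cdot f_n + 1\cdot g_n = f_n + g_n \xrightarrow{\mathcal{I^*}\text{-}\alpha \text{-} u.e.} 1\cdot f + 1\cdot g = f + g$, which is the first conclusion. Next I would apply the same theorem with $a = 1$ and $b = -1$, obtaining $f_n - g_n \xrightarrow{\mathcal{I^*}\text{-}\alpha \text{-} u.e.} f - g$, the second conclusion. Both substitutions use nonzero scalars, so no degenerate situation (such as a coefficient $|a|$ or $|b|$ collapsing to zero in the witnessing inequalities of the theorem's proof) can arise.

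There is essentially no obstacle here, since the corollary is a direct consequence of a more general result proved just above. The only point worth recording is that the witnessing data delivered by Theorem \ref{thm3.1}, namely the zero sequence $(\varepsilon_n)_{n\in\mathbb{N}}$ with $\varepsilon_n = \sigma_n + \rho_n$, the filter set $M_1 \cap M_2 \in \mathcal{F}(\mathcal{I})$, and the uniform cardinality bound $n_1 + n_2$, meet the requirements of Definition 3.1 for the combinations $f_n + g_n$ and $f_n - g_n$ respectively; this is exactly what the theorem guarantees, so the corollary follows at once.
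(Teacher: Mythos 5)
Your proposal is correct and matches the paper's intent exactly: the paper states Corollary \ref{cor1} without proof precisely because it is the immediate specialization of Theorem \ref{thm3.1} to $(a,b)=(1,1)$ and $(a,b)=(1,-1)$, which is what you do. Your additional remark that the witnessing data (the zero sequence, the set $M_1\cap M_2\in\mathcal{F}(\mathcal{I})$, and the bound $n_1+n_2$) carry over is a harmless elaboration of the same argument.
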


\begin{lem}\label{lem3}
 If $f$, $g$ are bounded and $f_n\xrightarrow{\mathcal{I^*}\text{-}\alpha \text{-} u.e.} f$,  $g_n\xrightarrow{\mathcal{I^*}\text{-}\alpha \text{-} u.e.} g$ then $f_n . g_n\xrightarrow{\mathcal{I^*}\text{-}\alpha \text{-} u.e.} f.g$. 
\end{lem}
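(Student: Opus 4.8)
The plan is to avoid estimating the product directly and instead use the polarization identity
\[
f_n g_n = \tfrac{1}{4}\bigl[(f_n+g_n)^2-(f_n-g_n)^2\bigr],
\]
together with the companion identity $fg=\tfrac14[(f+g)^2-(f-g)^2]$, so that the statement reduces to the linearity and squaring results already established.

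First I would apply Corollary~\ref{cor1} to $f_n\xrightarrow{\mathcal{I^*}\text{-}\alpha \text{-} u.e.}f$ and $g_n\xrightarrow{\mathcal{I^*}\text{-}\alpha \text{-} u.e.}g$ to conclude that $f_n+g_n\xrightarrow{\mathcal{I^*}\text{-}\alpha \text{-} u.e.}f+g$ and $f_n-g_n\xrightarrow{\mathcal{I^*}\text{-}\alpha \text{-} u.e.}f-g$. Since $f$ and $g$ are bounded, so are $f+g$ and $f-g$, which is exactly the hypothesis needed to square. Invoking the Note following Lemma~\ref{lem2} with the bounded limits $f+g$ and $f-g$, I would then obtain $(f_n+g_n)^2\xrightarrow{\mathcal{I^*}\text{-}\alpha \text{-} u.e.}(f+g)^2$ and $(f_n-g_n)^2\xrightarrow{\mathcal{I^*}\text{-}\alpha \text{-} u.e.}(f-g)^2$.

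Finally I would apply Theorem~\ref{thm3.1} with $a=\tfrac14$ and $b=-\tfrac14$ to the sequences $\{(f_n+g_n)^2\}_{n\in\mathbb{N}}$ and $\{(f_n-g_n)^2\}_{n\in\mathbb{N}}$, which yields
\[
\tfrac14(f_n+g_n)^2-\tfrac14(f_n-g_n)^2\xrightarrow{\mathcal{I^*}\text{-}\alpha \text{-} u.e.}\tfrac14(f+g)^2-\tfrac14(f-g)^2 .
\]
By the two polarization identities the left-hand side is $f_n g_n$ and the right-hand side is $f.g$, so $f_n.g_n\xrightarrow{\mathcal{I^*}\text{-}\alpha \text{-} u.e.}f.g$, as required.

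I expect the only genuinely delicate point to be the squaring step, and this is precisely where boundedness is indispensable. In the proof of the Note, passing from $h_n\to h$ to $h_n^2\to h^2$ rests on the factorization $|h_n^2(x_n)-h^2(x)|=|h_n(x_n)-h(x)|\,|h_n(x_n)+h(x)|$ and on bounding the second factor by $|h_n(x_n)-h(x)|+2\sup_x|h(x)|$; the resulting inflated threshold $\varepsilon_n^2+2\sup_x|h(x)|\,\varepsilon_n$ is again a zero sequence only because the limit is bounded uniformly in $x$. Hence I must first verify that $f+g$ and $f-g$ inherit boundedness (they do, being finite linear combinations of bounded functions) before the Note may legitimately be applied; everything else is a purely formal combination of the cited results. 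A more direct route through $f_ng_n-fg=(f_n-f)g_n+f(g_n-g)$ would instead force control of $g_n(x_n)$, which is not assumed bounded, so the polarization argument is the cleaner one.
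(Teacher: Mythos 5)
Your proposal is correct and takes essentially the same route as the paper's proof: the polarization identity $f_n g_n = \tfrac{1}{4}\bigl[(f_n+g_n)^2-(f_n-g_n)^2\bigr]$, squaring the sums/differences (legitimate because $f\pm g$ inherit boundedness, which is what the Note following Lemma~\ref{lem2} requires), and then linearity via Corollary~\ref{cor1} and Theorem~\ref{thm3.1}. If anything, your write-up is more precise than the paper's, which misprints the identity as $\frac{(f_n+g_n)^2-(f_n+g_n)^2}{4}$ (the second term should be $(f_n-g_n)^2$) and cites Lemma~\ref{lem1} and Lemma~\ref{lem2} where the squaring step for a bounded (not necessarily constant) limit actually rests on the Note, exactly as you identify.
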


\begin{proof}
Applying the Lemma \ref{lem1}, Lemma \ref{lem2} and Corollary \ref{cor1},  we have
$f_n . g_n = \frac{(f_n + g_n)^2 -(f_n + g_n)^2}{4}$
$\xrightarrow{\mathcal{I^*}\text{-}\alpha \text{-}u.e.}\frac{(f + g)^2 -(f + g)^2}{4} = f.g$.
\end{proof}

\begin{rem}
The Lemma \ref{lem1}, Lemma \ref{lem2}, Theorem \ref{thm3.1} and Lemma \ref{lem3} also hold good for $\mathcal{I^*}\text{-}\alpha \text{-}$strong uniform equal convergences.
\end{rem}

\begin{thm}
Let $\Phi$ be a class of functions on $X.$ If $\Phi$ is a lattice, a translation lattice, a congruence lattice, a weakly affine lattice, an affine lattice or a subtractive lattice then so is $\Phi ^{\mathcal{I^*}\text{-}\alpha \text{-} u.e.}$.
\end{thm}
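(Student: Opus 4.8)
The plan is to verify each lattice axiom for $\Phi^{\mathcal{I^*}\text{-}\alpha\text{-}u.e.}$ by reducing it to a structural property already established for the limit operation, primarily Corollary \ref{cor1} (closure under sums and differences) together with Theorem \ref{thm3.1} (closure under real linear combinations). The overall strategy is: take functions in the new class, realize them as $\mathcal{I^*}\text{-}\alpha$-uniform equal limits of sequences drawn from $\Phi$, apply the corresponding closure operation to those approximating sequences inside $\Phi$ (using that $\Phi$ itself has the relevant lattice property), and then invoke the appropriate preservation lemma to conclude that the resulting combination lies in $\Phi^{\mathcal{I^*}\text{-}\alpha\text{-}u.e.}$.

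First I would dispose of the \emph{constant functions}: every constant function $c$ is the $\mathcal{I^*}\text{-}\alpha$-uniform equal limit of the constant sequence $f_n \equiv c$ (which lies in $\Phi$ since $\Phi$ is a lattice), so all constants belong to $\Phi^{\mathcal{I^*}\text{-}\alpha\text{-}u.e.}$. For the \emph{translation lattice} part, given $f \in \Phi^{\mathcal{I^*}\text{-}\alpha\text{-}u.e.}$ with $f_n \xrightarrow{\mathcal{I^*}\text{-}\alpha\text{-}u.e.} f$ and $f_n \in \Phi$, and $c \in \mathbb{R}$, I would apply Theorem \ref{thm3.1} to the constant sequence converging to $c$ to get $f_n + c \xrightarrow{\mathcal{I^*}\text{-}\alpha\text{-}u.e.} f + c$, with $f_n + c \in \Phi$; the \emph{congruence lattice} ($-f$) and \emph{affine}/\emph{weakly affine} ($cf$) cases follow the same way by taking the linear combination $a f_n + b g_n$ with suitable scalars, noting that $\Phi$'s own affine/weakly-affine structure keeps $c f_n$ inside $\Phi$. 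The \emph{subtractive lattice} case is immediate from Corollary \ref{cor1}: if $f, g$ are limits of $f_n, g_n \in \Phi$ then $f_n - g_n \in \Phi$ (by subtractivity of $\Phi$) and $f_n - g_n \xrightarrow{\mathcal{I^*}\text{-}\alpha\text{-}u.e.} f - g$.

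The main obstacle is the lattice axiom itself, namely closure under $\max$ and $\min$, since no preservation result for pointwise maxima or minima has been proved in the excerpt. To handle this I would use the identities $\max(f,g) = \tfrac{1}{2}\bigl(f + g + |f - g|\bigr)$ and $\min(f,g) = \tfrac{1}{2}\bigl(f + g - |f - g|\bigr)$, which reduce the problem to showing that $f \in \Phi^{\mathcal{I^*}\text{-}\alpha\text{-}u.e.}$ implies $|f| \in \Phi^{\mathcal{I^*}\text{-}\alpha\text{-}u.e.}$ (combined with the already-handled sums and scalar multiples). The key estimate is the reverse triangle inequality $\bigl||f_n(x_n)| - |f(x)|\bigr| \le |f_n(x_n) - f(x)|$, which forces the set $\{n \in M : \bigl||f_n(x_n)| - |f(x)|\bigr| \ge \varepsilon_n\}$ to be contained in $\{n \in M : |f_n(x_n) - f(x)| \ge \varepsilon_n\}$, so the same $M$, $(\varepsilon_n)$ and $n_0$ witness $|f_n| \xrightarrow{\mathcal{I^*}\text{-}\alpha\text{-}u.e.} |f|$; and $|f_n| = \max(f_n, -f_n) \in \Phi$ because $\Phi$ is a lattice (hence a congruence lattice where relevant). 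I would prove this $|f|$-preservation as the crux, then assemble $\max$ and $\min$ from it via Theorem \ref{thm3.1}, and finally record that for each listed type the corresponding stronger closure (translation, congruence, affine, subtractive) of $\Phi$ is exactly what is needed to keep the approximating combinations inside $\Phi$ so that the preservation lemmas apply.
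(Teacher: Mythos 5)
Your overall architecture (constants via constant sequences, the reverse triangle inequality to get $|f_n|\to|f|$, the identity $\max(f,g)=\tfrac{1}{2}\bigl(f+g+|f-g|\bigr)$, and Theorem \ref{thm3.1}/Corollary \ref{cor1} for the convergence bookkeeping) is the same as the paper's, but there is a genuine gap in how you certify that the approximating sequences lie in $\Phi$, and it breaks precisely the basic lattice case. Membership in $\Phi^{\mathcal{I^*}\text{-}\alpha\text{-}u.e.}$ always requires an approximating sequence drawn \emph{from} $\Phi$. You reduce $\max(f,g)\in\Phi^{\mathcal{I^*}\text{-}\alpha\text{-}u.e.}$ to ``already-handled sums and scalar multiples'' plus $|f|$-preservation; but a plain lattice $\Phi$ is not closed under $f_n+g_n$, $cf_n$ or $f_n-g_n$ (those closures are available only under the translation/affine/subtractive hypotheses), so the closure of $\Phi^{\mathcal{I^*}\text{-}\alpha\text{-}u.e.}$ under sums and scalar multiples is \emph{not} in fact established when $\Phi$ is merely a lattice. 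Likewise your certificate $|f_n|=\max(f_n,-f_n)\in\Phi$ needs $-f_n\in\Phi$, i.e.\ the congruence axiom, which a plain lattice or translation lattice does not supply; and $|f_n|\in\Phi$ can genuinely fail: take $\Phi$ to be the nondecreasing functions on $\mathbb{R}$ (a lattice containing all constants), where $f(x)=x$ belongs to $\Phi$ but $|f|$ does not. Since every stronger notion in the statement is by definition a lattice with extra closure, this gap in the $\max/\min$ step infects every case of the theorem.

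The repair is exactly the observation the paper's proof exploits: your reverse-triangle-inequality estimate, Theorem \ref{thm3.1} and Corollary \ref{cor1} are statements about \emph{convergence} only, and never require the intermediate sequences $f_n+g_n$, $f_n-g_n$, $|f_n-g_n|$ to belong to $\Phi$. Use them to conclude
\[
\frac{f_n+g_n}{2}+\frac{|f_n-g_n|}{2}\;\xrightarrow{\mathcal{I^*}\text{-}\alpha\text{-}u.e.}\;\frac{f+g}{2}+\frac{|f-g|}{2}=\max(f,g),
\]
and then note that the left-hand sequence is not a new combination at all: it equals $\max(f_n,g_n)$ pointwise, which lies in $\Phi$ by the lattice axiom itself. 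Thus membership in $\Phi$ is needed only for the single collapsed sequence $\max(f_n,g_n)$ (dually $\min(f_n,g_n)$), never for the pieces. With that adjustment the lattice case closes, and your treatment of the stronger types (translation, congruence, weakly affine, affine, subtractive) then works as you describe, since there the closure of $\Phi$ under $f_n+c$, $-f_n$, $cf_n$, $f_n-g_n$ is part of the hypothesis and Theorem \ref{thm3.1} transfers it to the limits.
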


\begin{proof}
 Let $\Phi$ be a lattice. Then $\Phi$ contains all constant functions. If $f$ is a constant function  and $f_n =f, n\in \mathbb{N}$, belonging to $\Phi$, then for any sequence $(\varepsilon _n)_{n\in \mathbb{N}}$ of positive reals converging to zero, $M\in \mathcal{F}(\mathcal{I})$ the set $\{n\in \mathbb{N} : |f_n (x_n)-f(x)|\geq \varepsilon _n \}$ is empty for each $x\in X$ and $(x_n)_{n\in M} \rightarrow x$ and so $\Phi^{\mathcal{I^*}\text{-}\alpha \text{-} u.e.}$ contains all constants functions. Now we approach to show $max(f,g)$ and $min(f,g)\in \Phi^{ \mathcal{I^*}\text{-}\alpha \text{-} u.e.} $. For, if $f_n\xrightarrow{\mathcal{I^*}\text{-}\alpha \text{-} u.e.} f$, then then there exists a sequence $(\varepsilon _n)_{n\in \mathbb{N}}$ of positive reals converging to zero,  $M\in \mathcal{F(I)}$ and  $n_0\in \mathbb{N}$ such that $|\{n\in M : |f_n (x_n)-f(x)|\geq \varepsilon _n \}|\leq n_0$ for each $x\in X$ and $(x_n)_{n\in M} \rightarrow x$. Since $\Big||f_n(x_n )|-|f(x)|\Big|\leq \big|f_n (x_n )-f(x)\big|$ and observe that $\{n\in M : \Big||f_n(x_n )|-|f(x)|\Big|\geq \varepsilon _n \}\subset \{n\in M : \big|f_n(x_n )-f(x)\big|\geq \varepsilon _n \}$. Therefore $|\{n\in M : \Big||f_n(x_n )|-|f(x)|\Big|\geq \varepsilon _n \}|\leq n_0$  for each $x\in X$ and $(x_n)_{n\in M} \rightarrow x.$ 
Hence $|f|\in \Phi ^{\mathcal{I^*}\text{-}\alpha \text{-} u.e.}$. 

Now using the Corollary \ref{cor1} we have   $\frac{f_n + g_n}{2} + \frac{|f_n - g_n|}{2}\xrightarrow{\mathcal{I^*}\text{-}\alpha \text{-} u.e.}\frac{f + g}{2} + \frac{|f - g|}{2}$
which proves $max(f, g)\in \Phi ^{\mathcal{I^*}\text{-}\alpha \text{-} u.e.}, $ in the similar way we can  show that  $min(f, g)\in \Phi ^{\mathcal{I^*}\text{-}\alpha \text{-} u.e.}$. Therefore $\Phi ^{\mathcal{I^*}\text{-}\alpha \text{-} u.e.}$ is a lattice. The proof of the remaining parts stay on the similar arguments.
\end{proof}

\begin{thm}
Let $\Phi$ be a class of functions on $X$ and  $f\in \Phi^{\mathcal{I^*}\text{-}\alpha \text{-} u.e}$ be bounded on $X$ and $f(x)\neq 0$ for each $x\in X$. If $\frac{1}{f}$ is bounded on $X$, then $\frac{1}{f}\in  \Phi ^{\mathcal{I^*}\text{-}\alpha \text{-} u.e}$.
\end{thm}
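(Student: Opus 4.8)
The plan is to realise $1/f$ explicitly as an $\mathcal{I^*}\text{-}\alpha\text{-}u.e.$ limit of functions drawn from $\Phi$, by chaining the algebraic stability results already established rather than by inverting the approximants directly. Since $f\in\Phi^{\mathcal{I^*}\text{-}\alpha\text{-}u.e.}$, I first fix a sequence $(f_n)$ in $\Phi$ with $f_n\xrightarrow{\mathcal{I^*}\text{-}\alpha\text{-}u.e.}f$. Boundedness of $1/f$ gives a constant $\delta>0$ with $|f(x)|\ge\delta$ for all $x\in X$, and boundedness of $f$ a constant $K$ with $|f(x)|\le K$; hence $\delta^{2}\le f^{2}\le K^{2}$ everywhere and $1/f^{2}\le 1/\delta^{2}$ is bounded. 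Throughout I take $\Phi$ to be an ordinary class, so that $\Phi$ is closed under products and under reciprocals of its nowhere-vanishing members; without this the conclusion $1/f\in\Phi^{\mathcal{I^*}\text{-}\alpha\text{-}u.e.}$ would be vacuous.

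The device that circumvents the possible zeros of the $f_n$ is the identity $\frac1f=f\cdot\frac1{f^{2}}$. I would therefore set
\[
g_n:=f_n\cdot\frac{1}{\max\big(f_n^{2},\,\delta^{2}/2\big)},
\]
noting that every $g_n$ genuinely lies in $\Phi$: the function $\max(f_n^{2},\delta^{2}/2)\in\Phi$ is nowhere zero (being $\ge\delta^{2}/2$), so its reciprocal is in $\Phi$, and one last product keeps us in $\Phi$. Now I chain the known facts. By the Note following Lemma \ref{lem2} (here $f$ is bounded) $f_n^{2}\xrightarrow{\mathcal{I^*}\text{-}\alpha\text{-}u.e.}f^{2}$; applying the $1$-Lipschitz map $t\mapsto\max(t,\delta^{2}/2)$ exactly as $|\cdot|$ is treated in the preceding theorem gives $\max(f_n^{2},\delta^{2}/2)\xrightarrow{\mathcal{I^*}\text{-}\alpha\text{-}u.e.}\max(f^{2},\delta^{2}/2)=f^{2}$, the last equality because $f^{2}\ge\delta^{2}>\delta^{2}/2$. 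Feeding this into the reciprocal estimate below yields $1/\max(f_n^{2},\delta^{2}/2)\xrightarrow{\mathcal{I^*}\text{-}\alpha\text{-}u.e.}1/f^{2}$, and since $f$ and $1/f^{2}$ are both bounded, Lemma \ref{lem3} gives $g_n\xrightarrow{\mathcal{I^*}\text{-}\alpha\text{-}u.e.}f\cdot(1/f^{2})=1/f$, so that $1/f\in\Phi^{\mathcal{I^*}\text{-}\alpha\text{-}u.e.}$.

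The single genuinely new computation is the reciprocal estimate, and it is where the truncation pays off. Writing $u_n=\max(f_n^{2},\delta^{2}/2)$ and $u=f^{2}$, one has $u_n\ge\delta^{2}/2$ and $u\ge\delta^{2}$ at every point, so for each $x\in X$ and each $(x_n)_{n\in M}\to x$,
\[
\Big|\tfrac{1}{u_n(x_n)}-\tfrac{1}{u(x)}\Big|=\frac{|u(x)-u_n(x_n)|}{u_n(x_n)\,u(x)}\le\frac{2}{\delta^{4}}\,|u_n(x_n)-u(x)|.
\]
Thus if $(\eta_n)$, $M\in\mathcal{F}(\mathcal{I})$ and $n_0$ witness $u_n\xrightarrow{\mathcal{I^*}\text{-}\alpha\text{-}u.e.}u$, then $\lambda_n:=(2/\delta^{4})\eta_n$ is again a null sequence of positive reals and $\{n\in M:|1/u_n(x_n)-1/u(x)|\ge\lambda_n\}\subset\{n\in M:|u_n(x_n)-u(x)|\ge\eta_n\}$ has cardinality at most $n_0$, uniformly in $x$ and in the chosen $(x_n)$. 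The main obstacle the argument is designed to overcome is precisely that the given approximants $f_n$ may vanish somewhere on $X$ for infinitely many $n$, so that $1/f_n$ need not be defined and the naive estimate for $|1/f_n(x_n)-1/f(x)|$ has no uniform denominator bound; replacing the inversion of $f_n$ by inversion of the truncated square $u_n\ge\delta^{2}/2$ supplies a uniform positive lower bound at every index and every point, which is exactly what forces the cardinality bound to hold.
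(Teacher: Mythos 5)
Your proof is correct, and it takes a genuinely different route from the paper's. The paper truncates the approximants themselves: with $(\sigma_n)$ the witnessing null sequence it sets $g_n=\max\{f_n,\sqrt{\sigma_n}\}$, shows $g_n\xrightarrow{\mathcal{I^*}\text{-}\alpha\text{-}u.e.}f$ at the cost of one additional finite correction term (coming from $1/|f|<\lambda$ and $\sigma_n\to 0$), and then inverts $g_n$ directly, compensating the degenerating lower bound $g_n\ge\sqrt{\sigma_n}$ by taking $\lambda\sqrt{\sigma_n}$ as the new null sequence. You instead invert through the square, via the identity $1/f=f\cdot(1/f^{2})$: truncation of $f_n^{2}$ at the \emph{fixed} level $\delta^{2}/2$, a reciprocal estimate with the uniform denominator bound $\delta^{4}/2$, and then the Note after Lemma \ref{lem2} together with the product Lemma \ref{lem3}. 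As for what each approach buys: the paper's argument is self-contained and yields an explicit rate $\lambda\sqrt{\sigma_n}$, whereas yours is more modular; but, more importantly, inverting the positive functions $\max(f_n^{2},\delta^{2}/2)$ makes your argument indifferent to the sign of $f$. The paper's chain of inclusions relies on $\{n\in M: f(x)\le\sqrt{\sigma_n}\}\subseteq\{n\in M: f^{2}(x)\le\sigma_n\}$, which is false at points where $f<0$; indeed $1/g_n>0$ cannot converge to $1/f$ at such points, so the paper's proof as written covers only $f>0$, while the theorem's hypotheses ($f$ non-vanishing, $1/f$ bounded) allow $f$ to be negative or sign-changing. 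Your decomposition handles the general case, so it in fact repairs this defect. Note finally that both proofs need $\Phi$ to be closed under the operations employed (maxima with constants, products, reciprocals of non-vanishing members, i.e.\ essentially an ordinary class): this hypothesis is absent from the statement, is used silently in the paper (``Define $g_n\in\Phi$''), and is made explicit by you — which is the right thing to do, although ``vacuous'' is not quite the word; without such closure the conclusion is simply not provable and can fail.
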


\begin{proof}
 Since $f\in \Phi^{\mathcal{I^*-}\alpha-u.e},$ there exists a sequence $(\sigma _n)_{n\in\mathbb{N}}$ of positive reals converging to zero,  a set $M\in \mathcal{F(I)}$ and  $n_0\in \mathbb{N}$ such that $|\{n\in M : |f_n (x_n)-f(x)|\geq \sigma _n \}|\leq n_0$  for each $x\in X$ and $(x_n)_{n\in M} \rightarrow x$. 
Let $A=\{n\in M : |f_n (x_n)-f(x)|\geq \sigma _n \}$.
Now $\frac{1}{f}$ being bounded on $X$, there exists a positive number $\lambda$ such that $\frac{1}{|f(x)|}< \lambda$ for all $x\in X$. 
Define $g_n\in \Phi$ by $g_n(x)=max\{f_n(x),\sqrt{\sigma_n}\}$ for $x\in X$ and 
$n\in \mathbb{N}$. 
Now  we have 
\begin{align*}
 &\{n\in M : |g_n(x_n)-f(x)|\geq \sigma _n \}    \\
\subset \ &\{n\in M :g_n = f_n, |g_n(x_n)-f(x)|\geq \sigma _n \}\cup \{n\in M : g_n =\sqrt{\sigma_n},  |g_n(x_n)-f(x)|\geq \sigma _n \} \\
\subseteq \ & A \cup \{n\in M : g_n =\sqrt{\sigma_n},  g_n(x_n)-f(x)\geq \sigma _n \}\cup \{n\in M : g_n =\sqrt{\sigma_n},  -g_n(x_n)+f(x)\geq \sigma _n \} \\
\subseteq \ & A \cup \{n\in M : f(x)\leq \sqrt{\sigma_n}-\sigma _n \}\cup  \{n\in M : f(x)\geq f_n(x_n)+\sigma _n \}  \\
\subseteq \ & A \cup \{n\in M : f(x)\leq \sqrt{\sigma_n}\}\cup A \\
\subseteq \ & A \cup  \{n\in M : f^2 (x)\leq \sigma_n\}.       
\end{align*}
Since $\frac{1}{|f(x)|}< \lambda$ for all $x\in X$ and $(\sigma _n)_{n\in \mathbb{N}}$ is convergent, $\{n\in M : f^2 (x)\leq \sigma_n\}$ is finite.  
Let $|\{n\in M : f^2 (x)\leq \sigma_n\}|\leq n_1$.
Therefore $|\{n\in M : |g_n(x_n)-f(x)|\geq \sigma _n \}| \leq n_0 + n_1$ for  $x\in X$ and $(x_n)_{n\in M} \rightarrow x$.
Now $|\{n\in M : |\frac{1}{g_n (x_n)} - \frac{1}{f(x)}|\geq \lambda \sqrt{\sigma _n} \}|   
 =  |\{n\in M : \frac{|g_n (x_n)-f(x)|}{|g_n (x_n)||f(x)|} \geq  \lambda \sqrt{\sigma _n} \}|
 \leq  |\{n\in M : |g_n(x_n)-f(x)|\geq \sigma _n \}|
 \leq  n_0 + n_1 $
 for  $x\in X$ and $(x_n)_{n\in M} \rightarrow x.$
As $\lambda \sqrt{\sigma _n} \rightarrow 0$, so   $ \frac{1}{g_n}  \xrightarrow{\mathcal{I^*}\text{-}\alpha \text{-} u.e.}\frac{1}{f}$. 
Hence $\frac{1}{f}\in  \Phi ^{\mathcal{I^*}\text{-}\alpha \text{-} u.e}$. This proves the theorem.
\end{proof}

\begin{thm}
Let $\Phi$ be a class of functions on $X.$ If $\Phi$ is a lattice, a translation lattice, a congruence lattice, a weakly affine lattice, an affine lattice or a subtractive lattice then so is $\Phi ^{\mathcal{I^*}\text{-}\alpha \text{-} s.u.e.}$.
\end{thm}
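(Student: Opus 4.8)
The plan is to mirror the proof of the corresponding theorem for $\Phi^{\mathcal{I^*}\text{-}\alpha\text{-}u.e.}$, systematically replacing every witnessing zero-sequence $(\varepsilon_n)$ by a witnessing sequence whose series $\Sigma_{n=1}^{\infty}\varepsilon_n$ converges, and invoking the Remark that Lemma \ref{lem1}, Lemma \ref{lem2}, Theorem \ref{thm3.1} and Lemma \ref{lem3} all remain valid for $\mathcal{I^*}\text{-}\alpha\text{-}$strong uniform equal convergence. With those preliminary results available in the $s.u.e.$ form, each structural closure property transfers from $\Phi$ to $\Phi^{\mathcal{I^*}\text{-}\alpha\text{-}s.u.e.}$ by the same bookkeeping on level sets as before.

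First, for the lattice case, I would check that $\Phi^{\mathcal{I^*}\text{-}\alpha\text{-}s.u.e.}$ contains the constants by taking $f_n\equiv f$ constant, so that the relevant set is empty for any convergent series of positive reals and any $M\in\mathcal{F}(\mathcal{I})$. Next, for $f_n\xrightarrow{\mathcal{I^*}\text{-}\alpha\text{-}s.u.e.}f$ with witness $\Sigma\varepsilon_n<\infty$, the inequality $\big||f_n(x_n)|-|f(x)|\big|\le|f_n(x_n)-f(x)|$ gives the inclusion of level sets, so the very same convergent series witnesses $|f_n|\xrightarrow{\mathcal{I^*}\text{-}\alpha\text{-}s.u.e.}|f|$, whence $|f|\in\Phi^{\mathcal{I^*}\text{-}\alpha\text{-}s.u.e.}$. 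Writing $\max(f,g)=\frac{f+g}{2}+\frac{|f-g|}{2}$ and $\min(f,g)=\frac{f+g}{2}-\frac{|f-g|}{2}$ and applying the $s.u.e.$ analogue of Corollary \ref{cor1} then places $\max(f,g)$ and $\min(f,g)$ in the class, so it is a lattice.

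For the remaining structures I would proceed in the order of the definition. Translation: since the constants lie in the class, $f+c\in\Phi^{\mathcal{I^*}\text{-}\alpha\text{-}s.u.e.}$ follows from the $s.u.e.$ form of Theorem \ref{thm3.1}. Congruence: the identity $|(-f_n)(x_n)-(-f)(x)|=|f_n(x_n)-f(x)|$ shows the same series witnesses $-f$. Weakly affine and affine: if $\Sigma\varepsilon_n<\infty$ witnesses $f_n\to f$, then for a scalar $c$ the level sets for $cf_n\to cf$ at threshold $|c|\varepsilon_n$ coincide with those for $f_n\to f$ at threshold $\varepsilon_n$, and the scaled series $\Sigma|c|\varepsilon_n$ still converges; the case $c=0$ reduces to a constant. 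Subtractive: apply the $s.u.e.$ form of Corollary \ref{cor1} to obtain $f-g$.

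The one point requiring attention—the only genuine difference from the $u.e.$ argument—is that summability, rather than mere convergence to zero, must be preserved under each operation. This is where I expect the real care to lie: for sums one must note $\Sigma(a\sigma_n+b\rho_n)=a\Sigma\sigma_n+b\Sigma\rho_n<\infty$, and for scalar multiples $\Sigma|c|\varepsilon_n=|c|\Sigma\varepsilon_n<\infty$. Since each operation only multiplies or adds the witnessing series by constants, summability is retained throughout, and the argument then closes exactly as in the $u.e.$ case.
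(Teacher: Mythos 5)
Your proposal is correct and follows essentially the same route as the paper: show constants and $|f|$ lie in $\Phi^{\mathcal{I^*}\text{-}\alpha\text{-}s.u.e.}$ via level-set inclusion, obtain closure under linear combinations while checking the witnessing series remains summable, and then get $\max(f,g)$ and $\min(f,g)$ from $\frac{f+g}{2}\pm\frac{|f-g|}{2}$, with the remaining lattice structures handled analogously. The only cosmetic difference is your choice of witness for the sum (the combined series $|a|\Sigma\sigma_n+|b|\Sigma\rho_n$ directly, versus the paper's $\varepsilon_n=\max\{2|a|\sigma_n,2|b|\eta_n\}$ bounded by that same sum), which changes nothing of substance.
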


\begin{proof}
Let $\Phi$ be a lattice. Then $\Phi$ contains all constant functions. We can  easily prove that  $\Phi ^{\mathcal{I^*}\text{-}\alpha \text{-} s.u.e.}$ also contains the constant functions. Now we show if  $f_n\xrightarrow{\mathcal{I^*}\text{-}\alpha \text{-} s.u.e.} f$ then $|f_n|\xrightarrow{\mathcal{I^*}\text{-}\alpha \text{-} s.u.e.} |f|$. 
For, let there  exist a convergent series $\Sigma _{n=1}^{\infty}\varepsilon _n$ of positive reals,  a set $M\in \mathcal{F(I)}$ and  $n_0\in \mathbb{N}$ such that $|\{n\in M : |f_n (x_n)-f(x)|\geq \varepsilon _n \}|\leq n_0$  for each $x\in X$ and $(x_n)_{n\in M} \rightarrow x$.
Since for any two reals $a_1$ and $a_2$,  $\Big||a_1|-|a_2|\Big|\leq \big|a_1-a_2\big|$, it follows that 
$\{n\in M : \Big||f_n(x_n )|-|f(x)|\Big|\geq \varepsilon _n \}\subset \{n\in M : |f_n(x_n )-f(x)|\geq \varepsilon _n \}$. Therefore $|\{n\in M: \Big||f_n(x_n )|-|f(x)|\Big|\geq \varepsilon _n \}|\leq n_0$  for each $x\in X$ and $(x_n)_{n\in M} \rightarrow x$.
Hence $|f|\in \Phi ^{\mathcal{I^*}\text{-}\alpha \text{-} s.u.e.}$.
Now if $f_n\xrightarrow{\mathcal{I^*}\text{-}\alpha \text{-} s.u.e.} f$,  $g_n\xrightarrow{\mathcal{I^*}\text{-}\alpha  \text{-} s.u.e.} g$ and $a, b \in \mathbb{R}$ then there exist convergent series $\Sigma _{n=1} ^{\infty }\sigma _n$ and $\Sigma _{n=1} ^{\infty }\eta _n$ of positive reals, $M_1, M_2\in \mathcal{F(I)}$  and $n_1, n_2 \in \mathbb{N}$  such that  $|\{n\in M_1 : |a f_n (x_n)-a f(x)|\geq \sigma _n \}|\leq n_1$ for each $x\in X$ and $(x_n)_{M_1} \rightarrow x$
 $|\{n\in M_2 : |b g_n (x_n)-b g(x)|\geq \eta _n \}|\leq n_2$ for each $x\in X$ and $(x_n)_{M_2} \rightarrow x$.
Suppose that $\varepsilon_n =  max\{2|a| \sigma _n,2|b| \eta _n \}$ and $n_0 = n_1 + n_2$. Now following the similar techniques of the Theorem \ref{thm3.1} we have
 $|\{n\in M_1\cap M_2 : |(a f_n+b g_n) (x_n)-(a f +b g)(x)|\geq \varepsilon _n \}|\leq n_0$ for each $x\in X$ and $(x_n)_{M_1\cap M_2} \rightarrow x$.
Now since $M_1\cap M_2\in \mathcal{F(I)}$ and 
$\Sigma _{n=1}^{\infty}\epsilon _n = \Sigma _{n=1}^{\infty} max\{2|a|\sigma _n,2|b| \eta _n \}
\leq  \Sigma _{n=1}^{\infty} \{2|a|\sigma _n +2|b| \eta _n \}=2|a|\Sigma _{n=1}^{\infty}\sigma _n + 2|b|\Sigma _{n=1}^{\infty}\eta _n
< \infty$.
Hence $a f_n +b g_n \xrightarrow{\mathcal{I^*}\text{-}\alpha \text{-}    s.u.e.}a f + b g$. Therefore
  $\frac{f_n + g_n}{2} + \frac{|f_n - g_n|}{2}\xrightarrow{\mathcal{I^*}\text{-}\alpha \text{-} s.u.e.}\frac{f + g}{2} + \frac{|f - g|}{2}$
which gives $max(f, g)\in \Phi ^{\mathcal{I^*}\text{-}\alpha \text{-} s.u.e.} $. In the similar way we can easily show that  $min(f, g)\in \Phi ^{\mathcal{I^*}\text{-}\alpha \text{-}  s.u.e.} $. Therefore $\Phi$ is a lattice.The proofs of the remaining parts can be shown by the similar arguments.
\end{proof}

\section{\textbf{ Some types of $\mathcal{I}$-convergence preserved under uniform conjugacy}}

First we recall the following preliminary concept found in classical mathematical theory for the sake of completeness.

Let $(X,d)$ and $(Y,\rho)$ be two metric spaces. If $h:X\rightarrow Y$ is a homeomorphism such that $h$ is uniformly continuous on $X$ and $h^{-1}$ is uniformly continuous on $Y$, then $h$ is said to be uniform homeomorphism.

Now we recall the definitions of uniform conjugacy and $\mathcal{I}$-exhaustiveness for sequences of functions which will be useful in the sequel.
\begin{defi}\cite{Tian Chen}
Let $(X,d)$ and $(Y,\rho)$ be two metric spaces, $F=\{f_n\}_{n\in \mathbb{N}}$ and $G=\{g_n\}_{n\in \mathbb{N}}$ be two sequences of maps in $X$ and $Y$ respectively and $h:X\rightarrow Y$ be a homeomorphism. If for any $k\in \{1,2,\ldots \}$, $g_k (h(x))=h(f_k(x))$ for every $x\in X$, then $F$ and $G$ are said to be $h$-conjugate. In particular, if $h$ is a uniform homeomorphism then $F$ and $G$ are said to be uniformly $h$-conjugate.
\end{defi}

\begin{defi}\cite{Papachristodoulos Papanastassiou Wilczynski}
Let $(X,d)$ and $(Y,\rho)$ be two metric spaces. We suppose that $\{f_n\}_{n\in \mathbb{N}}$ is a sequence of functions from $X$ to $Y$. Then $\{f_n\}_{n\in \mathbb{N}}$ is said to be $\mathcal{I}$-exhaustive at $x$ iff for every $\varepsilon >0$, there exist a $\delta >0$, a set $A\in \mathcal{I}$ (depending on $\varepsilon$ and $x$) such that $\rho (f_n (x), f_n (y))<\varepsilon$ whenever $n\in \mathbb{N}\setminus A$ and for all $y\in S(x, \delta)$ where $S(x, \delta)=\{y\in X : d(x,y)<\delta \}$.The sequence $\{f_n\}_{n\in \mathbb{N}}$ is said to be  $\mathcal{I}$-exhaustive on $X$ iff it is $\mathcal{I}$-exhaustive at every $x\in X$.
\end{defi}

\begin{defi}\cite{Papachristodoulos Papanastassiou Wilczynski}
Let $(X,d)$ and $(Y,\rho)$ be two metric spaces and $f, f_n : X\rightarrow Y$. We say that the sequence $\{f_n\}_{n\in \mathbb{N}}$ $\mathcal{I}$-$\alpha$-converges to $f$ (written as $f_n\xrightarrow{\mathcal{I}\text{-}\alpha} f$) at $x_0\in X$ if and only if for each sequence $\{x_n\}_{n\in \mathbb{N}}$ of points of $X$ if $x_n\xrightarrow{\mathcal{I}}x_0)$, then $f_n(x_n)\xrightarrow{\mathcal{I}}f(x_0)$.
\end{defi}

Now we prove that $\mathcal{I}$-exhaustiveness, $\mathcal{I}$-uniform convergence and $\mathcal{I}$-$\alpha$-convergence of sequences of functions are preserved under uniform conjugacy.
\begin{thm}
Let $(X,d)$ and $(Y,\rho)$ be two metric spaces   and $h:X\rightarrow Y$ be a uniform homeomorphism and let  $F=\{f_n\}_{n\in \mathbb{N}}$ and $G=\{g_n\}_{n\in \mathbb{N}}$ be uniformly $h$-conjugate. If $F$ is $\mathcal{I}$-exhaustive on $X$ then $G$ is also $\mathcal{I}$-exhaustive on $Y$.
\end{thm}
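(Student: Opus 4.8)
The plan is to exploit the conjugacy relation, which rewrites as $g_n = h \circ f_n \circ h^{-1}$, together with the two moduli of uniform continuity supplied by the uniform homeomorphism $h$. I would fix an arbitrary point $y_0 \in Y$ and an arbitrary $\varepsilon > 0$, and aim to produce a radius $\eta > 0$ and a set $B \in \mathcal{I}$ witnessing $\mathcal{I}$-exhaustiveness of $G$ at $y_0$. Writing $x_0 = h^{-1}(y_0)$ and substituting $x = h^{-1}(y')$ into the defining identity $g_k(h(x)) = h(f_k(x))$, the uniform conjugacy gives $g_n(y_0) = h(f_n(x_0))$ and $g_n(y') = h(f_n(h^{-1}(y')))$ for every $y' \in Y$, so controlling $\rho(g_n(y_0), g_n(y'))$ reduces to controlling $\rho\big(h(f_n(x_0)), h(f_n(h^{-1}(y')))\big)$.

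First I would push the target $\varepsilon$ through $h$: since $h$ is uniformly continuous on $X$, there is $\varepsilon' > 0$ such that $d(u,v) < \varepsilon'$ implies $\rho(h(u), h(v)) < \varepsilon$ for all $u, v \in X$. The crucial point is that $\varepsilon'$ is a single number independent of the pair $(u,v)$, hence independent of $n$ once we later substitute $u = f_n(x_0)$ and $v = f_n(h^{-1}(y'))$. Next I would invoke the hypothesis that $F$ is $\mathcal{I}$-exhaustive at $x_0$, applied with the tolerance $\varepsilon'$: this yields $\delta > 0$ and $A \in \mathcal{I}$ such that $d(f_n(x_0), f_n(x)) < \varepsilon'$ whenever $n \in \mathbb{N} \setminus A$ and $x \in S(x_0, \delta)$.

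Finally I would transfer the $\delta$-ball back to $Y$ using the continuity of $h^{-1}$ at $y_0$: choose $\eta > 0$ so that $\rho(y_0, y') < \eta$ implies $d(x_0, h^{-1}(y')) < \delta$, i.e. $h^{-1}(y') \in S(x_0, \delta)$. Setting $B = A \in \mathcal{I}$, for any $y'$ with $\rho(y_0, y') < \eta$ and any $n \notin A$ the chain gives $d(f_n(x_0), f_n(h^{-1}(y'))) < \varepsilon'$, whence $\rho(g_n(y_0), g_n(y')) = \rho(h(f_n(x_0)), h(f_n(h^{-1}(y')))) < \varepsilon$. Since $y_0 \in Y$ was arbitrary, $G$ is $\mathcal{I}$-exhaustive on $Y$.

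As for the main obstacle, there is no deep difficulty, but the delicate point to get right is the order and the uniformity of the two continuity estimates. The set $A \in \mathcal{I}$ comes entirely from the exhaustiveness of $F$ and is simply reused as $B$, so the role of the ideal $\mathcal{I}$ is completely passive; the argument never needs any structural property of $\mathcal{I}$ beyond the fact that one set $A$ serves the conclusion. The substantive content is that uniform continuity of $h$ furnishes a modulus $\varepsilon'$ valid simultaneously for all the possibly wandering points $f_n(x_0)$, which is exactly what lets a single $A$ and a single $\delta$ work across all $n$; mere pointwise continuity of $h$ would let the modulus degrade with $n$ and the argument would collapse, whereas for $h^{-1}$ only continuity at the fixed point $y_0$ is actually needed.
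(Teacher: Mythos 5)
Your proposal is correct and follows essentially the same argument as the paper: push $\varepsilon$ through the uniform continuity of $h$ to get a tolerance, apply $\mathcal{I}$-exhaustiveness of $F$ at $x_0=h^{-1}(y_0)$ with that tolerance, pull the resulting ball back to $Y$ via continuity of $h^{-1}$, and reuse the same set $A\in\mathcal{I}$, with conjugacy converting the final estimate. Your added observation that only pointwise continuity of $h^{-1}$ at $y_0$ is needed (the paper invokes its uniform continuity, which it has anyway) is a correct minor sharpening, not a different route.
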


\begin{proof}
Since  $F$ is $\mathcal{I}$-exhaustive on $ X$, it is $\mathcal{I}$-exhaustive at every $x\in X$. Let $y\in Y$. Now $h$ being onto, there is $x\in X$ such that $h(x)=y$. First we show $G$ is $\mathcal{I}$-exhaustive at $y$. Let $\varepsilon >0$ be given. Since $h$ is uniformly continuous, there exists  $\delta>0$ such that for every $x_1, x_2 \in X$, $d(x_1, x_2)<\delta \Rightarrow \rho(h(x_1), h(x_2))<\varepsilon$. Since $F$ is $\mathcal{I}$-exhaustive at $x\in X$, there exists $\beta>0$ and a set $A\in \mathcal{I}$ such that for $n\in A^{c}$, $d(f_n(x),f_n(z))<\delta$ for all $z\in S(x,\beta)$ i.e. for all $z$ satisfying $d(x,z)<\beta$ implies $d(f_n(x),f_n(z))<\delta$. Since $h^{-1}$ is uniformly continuous, there exists a $\eta >0$ such that for every $y_1, y_2 \in Y$, $\rho(y_1, y_2)<\eta \Rightarrow d(h^{-1}(y_1), h^{-1}(y_2))<\beta$. Let $b\in S(y, \eta)$ then $\rho(y, b)< \eta$, which gives $d(h^{-1}(y), h^{-1}(b))< \beta$. It implies $d(f_n(x),f_n(z))<\delta$ for $n\in A^{c}$ where $z=h^{-1}(b)$. Therefore finally we get $\rho(h(f_n(x)),h(f_n(z)))<\varepsilon$ for  $n\in A^{c}$. Since $F$ and $G$ are  $h$-conjugate, we have $\rho(g_n(h(x)), g_n(h(z)))<\varepsilon$ i.e. $\rho(g_n(y), g_n(z))<\varepsilon$ for all $n\in A^{c}$. Hence  $G$ is $\mathcal{I}$-exhaustive at $y$. Since $y\in Y$ is arbitrary, $G$ is $\mathcal{I}$-exhaustive on $Y$.
\end{proof}

\begin{rem}
In the similar way we can prove that if $G$ is $\mathcal{I}$-exhaustive on $Y$ then $F$ is $\mathcal{I}$-exhaustive on $X$.
\end{rem}

\begin{thm}
Let $(X,d)$ and $(Y,\rho)$ be two metric spaces   and $h:X\rightarrow Y$ be a uniform homeomorphism and let  $F=\{f_n\}_{n\in \mathbb{N}}$ and $G=\{g_n\}_{n\in \mathbb{N}}$ be uniformly $h$-conjugate. Then $F$ is $\mathcal{I}$-uniformly convergent iff $G$ is so.
\end{thm}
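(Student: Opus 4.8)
The plan is to exploit the defining relation $g_n = h\circ f_n\circ h^{-1}$ (equivalently $g_n\circ h = h\circ f_n$) together with the uniform continuity of $h$ and of $h^{-1}$, transporting the uniform limit across the conjugacy exactly as the preceding theorem transported $\mathcal{I}$-exhaustiveness. Recall that $\{f_n\}$ is $\mathcal{I}$-uniformly convergent means there is a function $f:X\rightarrow X$ such that for each $\varepsilon>0$ the set $\{n\in\mathbb{N}: \sup_{x\in X} d(f_n(x),f(x))\geq \varepsilon\}$ lies in $\mathcal{I}$. Since the roles of $F$ and $G$ are symmetric (interchange $h$ and $h^{-1}$, and $d$ and $\rho$), it suffices to prove one implication, say that $\mathcal{I}$-uniform convergence of $F$ forces that of $G$.

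First I would produce the candidate limit for $G$. If $f_n\rightarrow f$ $\mathcal{I}$-uniformly, I set $g:=h\circ f\circ h^{-1}:Y\rightarrow Y$; this is well defined because $h$ is a bijection, and it is the natural guess coming from $g_n=h\circ f_n\circ h^{-1}$. Next, fix $\varepsilon>0$. Using uniform continuity of $h$, choose $\delta>0$ so that $d(x_1,x_2)<\delta$ implies $\rho(h(x_1),h(x_2))<\varepsilon/2$ for all $x_1,x_2\in X$. By hypothesis the set $A=\{n\in\mathbb{N}: \sup_{x\in X} d(f_n(x),f(x))\geq \delta\}$ belongs to $\mathcal{I}$.

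The key step is the pointwise translation. For arbitrary $y\in Y$ write $x=h^{-1}(y)$; then the conjugacy gives $g_n(y)=g_n(h(x))=h(f_n(x))$ and, by the definition of $g$, $g(y)=h(f(x))$, so $\rho(g_n(y),g(y))=\rho(h(f_n(x)),h(f(x)))$. For every $n\notin A$ we have $d(f_n(x),f(x))<\delta$ for all $x\in X$, so the choice of $\delta$ yields $\rho(g_n(y),g(y))<\varepsilon/2$; as $y$ was arbitrary (and $x$ ranges over all of $X$ through $h^{-1}$), this gives $\sup_{y\in Y}\rho(g_n(y),g(y))\leq \varepsilon/2<\varepsilon$ for every $n\notin A$. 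Hence $\{n\in\mathbb{N}: \sup_{y}\rho(g_n(y),g(y))\geq \varepsilon\}\subseteq A\in\mathcal{I}$, and the heredity property of the ideal $\mathcal{I}$ gives $\{n: \sup_{y}\rho(g_n(y),g(y))\geq \varepsilon\}\in\mathcal{I}$. Since $\varepsilon>0$ was arbitrary, $g_n\rightarrow g$ $\mathcal{I}$-uniformly, so $G$ is $\mathcal{I}$-uniformly convergent.

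I expect the only delicate point to be bookkeeping the direction of uniform continuity: transporting the bound to $G$ uses uniform continuity of $h$, while the reverse implication (recovering the convergence of $F$ from that of $G$ via $f_n=h^{-1}\circ g_n\circ h$) uses uniform continuity of $h^{-1}$. This is precisely where the full strength of ``uniform homeomorphism'' rather than mere homeomorphism is needed, and it is the reason the argument is symmetric; running the $\delta$-argument at the target $\varepsilon/2$ is the one routine subtlety, needed only to pass cleanly from the strict estimate to the defining set with $\geq \varepsilon$.
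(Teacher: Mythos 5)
Your proposal is correct and follows essentially the same route as the paper's proof: uniform continuity of $h$ supplies the $\delta$, the $\mathcal{I}$-uniform convergence of $F$ supplies a set $A\in\mathcal{I}$ outside of which $d(f_n(x),f(x))<\delta$, and the conjugacy relation transports the estimate to $\rho(g_n(y),g(y))$ with the converse handled by symmetry. Your version is in fact slightly tidier than the paper's, since you explicitly construct the limit $g=h\circ f\circ h^{-1}$ (the paper merely posits a $g$ that is $h$-conjugate to $f$) and you handle the strict-versus-nonstrict inequality with the $\varepsilon/2$ device.
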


\begin{proof}
Let $\{f_n\}_{n\in \mathbb{N}}$ be $\mathcal{I}$-uniformly convergent to $f$ where $f:X\rightarrow X$. We show that $\{g_n\}_{n\in \mathbb{N}}$ is $\mathcal{I}$-uniformly convergent to $g$ where $g:X\rightarrow X$ such that $f$ and $g$ are uniformly $h$-conjugate. Let $\varepsilon >0$ be given.  Since $h$ is uniformly continuous, there exists a $\delta >0$ such that for every $x_1, x_2 \in X$, $d(x_1, x_2)< \delta \Rightarrow \rho(h(x_1),h(x_2))<\varepsilon$. Since $\{f_n\}_{n\in \mathbb{N}}$ is $\mathcal{I}$-uniformly convergent to $f$, there is a set $A\in \mathcal{I}$ such that for $n\in A^{c}$, $d(f_n(x),f(x))<\delta$ for all $x\in X$, which gives $\rho(h(f_n(x)),h(f(x)))<\varepsilon$. Now by the condition $\rho(g_n(h(x)), g(h(x)))<\varepsilon$ for all $n\in A^{c}$ and for every $x\in X$. Since $h$ is bijective, we have $\rho(g_n(y),g(y))<\varepsilon$ for all $n\in A^{c}$ and for every $x\in X$. Hence  $\{g_n\}_{n\in \mathbb{N}}$ is $\mathcal{I}$-uniformly convergent to $g$. We can easily prove the converse part.
\end{proof}

\begin{prop}
Let $F$ and $G$ be uniformly $h$-conjugate. If $\{f_n\}_{n\in \mathbb{N}}$ converges $\mathcal{I}$-pointwise to $f$ then $\{g_n\}_{n\in \mathbb{N}}$ converges $\mathcal{I}$-pointwise to $g$ where $f$ and $g$ are $h$-conjugate.
\end{prop}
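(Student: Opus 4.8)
The plan is to produce the limit function $g$ explicitly and then transport the convergence from $X$ to $Y$ through the conjugacy, using the continuity of $h$ to compare distances. First I would set $g=h\circ f\circ h^{-1}$; then for every $x\in X$ we have $g(h(x))=h\big(f(h^{-1}(h(x)))\big)=h(f(x))$, so $f$ and $g$ are $h$-conjugate, as required. This is the only candidate limit compatible with the conjugacy, so the real content of the proposition is that $\{g_n\}_{n\in\mathbb{N}}$ does indeed converge $\mathcal{I}$-pointwise to it.

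Next I would fix an arbitrary $y\in Y$ and $\varepsilon>0$, and aim to show that $\{n\in\mathbb{N}:\rho(g_n(y),g(y))\geq\varepsilon\}\in\mathcal{I}$. Since $h$ is onto, choose $x\in X$ with $h(x)=y$. The conjugacy relations $g_n(h(x))=h(f_n(x))$ and $g(h(x))=h(f(x))$ let me rewrite $\rho(g_n(y),g(y))=\rho\big(h(f_n(x)),h(f(x))\big)$. Because $h$ is uniformly continuous, there is a $\delta>0$ such that $d(u,v)<\delta\Rightarrow\rho(h(u),h(v))<\varepsilon$ for all $u,v\in X$; taking the contrapositive with $u=f_n(x)$ and $v=f(x)$ gives $\rho\big(h(f_n(x)),h(f(x))\big)\geq\varepsilon\Rightarrow d(f_n(x),f(x))\geq\delta$. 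Hence one obtains the set inclusion
\[
\{n\in\mathbb{N}:\rho(g_n(y),g(y))\geq\varepsilon\}\subseteq\{n\in\mathbb{N}:d(f_n(x),f(x))\geq\delta\}.
\]

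Finally I would invoke the hypothesis: since $\{f_n\}_{n\in\mathbb{N}}$ converges $\mathcal{I}$-pointwise to $f$, the right-hand set lies in $\mathcal{I}$, and the hereditary property $(ii)$ of an ideal (subsets of members are members) forces the left-hand set into $\mathcal{I}$ as well. As $y\in Y$ and $\varepsilon>0$ were arbitrary, this establishes the $\mathcal{I}$-pointwise convergence of $\{g_n\}_{n\in\mathbb{N}}$ to $g$. I do not expect a genuine obstacle: the argument is a direct transport of distances through $h$. The only points requiring care are the direction of continuity — one must use continuity of $h$ itself (to pass from $d$ on $X$ to $\rho$ on $Y$), not of $h^{-1}$ — and the fact that $\mathcal{I}$-pointwise convergence is a per-point condition, so $\delta$ may depend on both $\varepsilon$ and the chosen $x=h^{-1}(y)$; uniform continuity (which is available) even makes $\delta$ independent of $x$, though that strength is not needed here.
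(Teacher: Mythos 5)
Your proof is correct; note that the paper actually states this proposition \emph{without} proof, leaving it as the pointwise analogue of the preceding theorem on $\mathcal{I}$-uniform convergence. Your argument is precisely that analogue --- uniform continuity of $h$ supplies the $\varepsilon$--$\delta$ transfer, the conjugacy rewrites $\rho(g_n(y),g(y))$ as $\rho\bigl(h(f_n(x)),h(f(x))\bigr)$, and heredity of the ideal finishes --- with the useful additions that you construct $g=h\circ f\circ h^{-1}$ explicitly (the paper never says what $g$ is, and even mistypes its domain in the neighboring theorem) and that you observe mere continuity of $h$ at $f(x)$ would suffice for this pointwise statement.
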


\begin{note}
It was shown in \cite{Balaz} that if $f: X\subset \mathbb{R}\rightarrow \mathbb{R}$ then $C(\mathcal{I})=C(\mathcal{I}_f)$ where $C(\mathcal{I}):$ the class of all functions $\mathcal{I}$-continuous on $X$ and $C(\mathcal{I}_f):$ the class of all functions continuous on $X$ in usual sense.
\end{note}

\begin{thm}
Let $(X,d)$ and $(Y,\rho)$ be two metric spaces   and $h:X\rightarrow Y$ be a uniform homeomorphism and let  $F=\{f_n\}_{n\in \mathbb{N}}$ and $G=\{g_n\}_{n\in \mathbb{N}}$ be uniformly $h$-conjugate. If $\{f_n\}_{n\in \mathbb{N}}$ $\mathcal{I}\text{-}\alpha$-converges to $f:X\rightarrow X$ then $\{g_n\}_{n\in \mathbb{N}}$ $\mathcal{I}\text{-}\alpha$-converges to $g:Y\rightarrow Y$ where $f$ and $g$ are $h$-conjugate.
\end{thm}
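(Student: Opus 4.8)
The plan is to transport the $\mathcal{I}$-$\alpha$-convergence of $F$ over to $G$ by pushing sequences through $h$ and $h^{-1}$, using that both are (uniformly) continuous together with the conjugacy relations $g_n\circ h=h\circ f_n$ and $g\circ h=h\circ f$ to interchange $f_n(x_n)$ with $g_n(y_n)$. Here $g$ is taken to be $g=h\circ f\circ h^{-1}$, so that $f$ and $g$ are $h$-conjugate as required.

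The one ingredient I would isolate first is the elementary fact that a continuous map preserves $\mathcal{I}$-convergence: if $\phi$ is continuous at a point $p$ and $p_n\xrightarrow{\mathcal{I}}p$, then $\phi(p_n)\xrightarrow{\mathcal{I}}\phi(p)$. This is immediate from the ideal axioms: given $\varepsilon>0$, continuity yields $\delta>0$ so that every point within $\delta$ of $p$ is mapped within $\varepsilon$ of $\phi(p)$, whence the index set where $\phi(p_n)$ is $\varepsilon$-far from $\phi(p)$ is contained in the index set where $p_n$ is $\delta$-far from $p$; the latter lies in $\mathcal{I}$, so the former does too by the hereditary (downward-closed) property of $\mathcal{I}$. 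This little observation is the technical heart of the proof, and everything else is bookkeeping.

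With it available, I would fix an arbitrary $y_0\in Y$ and an arbitrary sequence $\{y_n\}_{n\in\mathbb{N}}$ in $Y$ with $y_n\xrightarrow{\mathcal{I}}y_0$, and aim to show $g_n(y_n)\xrightarrow{\mathcal{I}}g(y_0)$. Set $x_0=h^{-1}(y_0)$ and $x_n=h^{-1}(y_n)$. Applying the observation to the uniformly continuous map $h^{-1}$ gives $x_n\xrightarrow{\mathcal{I}}x_0$ in $(X,d)$. Since $\{f_n\}_{n\in\mathbb{N}}$ is $\mathcal{I}$-$\alpha$-convergent to $f$, the definition forces $f_n(x_n)\xrightarrow{\mathcal{I}}f(x_0)$. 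Applying the observation once more, this time to the uniformly continuous map $h$, yields $h(f_n(x_n))\xrightarrow{\mathcal{I}}h(f(x_0))$ in $(Y,\rho)$.

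It then only remains to rewrite the two sides by conjugacy: $h(f_n(x_n))=g_n(h(x_n))=g_n(y_n)$ and $h(f(x_0))=g(h(x_0))=g(y_0)$, so that $g_n(y_n)\xrightarrow{\mathcal{I}}g(y_0)$. As $y_0$ and $\{y_n\}_{n\in\mathbb{N}}$ were arbitrary, $G$ is $\mathcal{I}$-$\alpha$-convergent to $g$, which is what we want. I do not anticipate a genuine obstacle; the only subtlety worth flagging is that the preservation lemma needs only continuity at the relevant point, so plain continuity of $h$ and $h^{-1}$ would already suffice, and the uniform homeomorphism hypothesis---though natural for consistency with the earlier theorems---is stronger than this statement actually requires.
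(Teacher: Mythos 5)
Your proposal is correct and follows essentially the same route as the paper: pull the sequence back through $h^{-1}$ to get $x_n\xrightarrow{\mathcal{I}}x_0$, apply the $\mathcal{I}\text{-}\alpha$-convergence of $\{f_n\}$, push forward through $h$, and rewrite both sides using the conjugacy relations. The only difference is that you explicitly state and prove the preservation lemma (continuity carries $\mathcal{I}$-convergence to $\mathcal{I}$-convergence) that the paper invokes without proof, and you correctly observe that plain continuity of $h$ and $h^{-1}$ suffices here.
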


\begin{proof}
Let $y\in Y$ and $y_n\xrightarrow{\mathcal{I}}y$. Since  $h$ is onto, we get $x,x_n\in X$, $n\in \mathbb{N}$, such that $h(x_n)=y_n$ and $h(x)=y$. So $y_n\xrightarrow{\mathcal{I}}y \Rightarrow h(x_n)\xrightarrow{\mathcal{I}}h(x)$.  Since  $h^{-1}$ is continuous,  $x_n\xrightarrow{\mathcal{I}}x$. Then by the condition we have $f_n(x_n)\xrightarrow{\mathcal{I}}f(x)$. Now $h$ being continuous,  $h(f_n(x_n))\xrightarrow{\mathcal{I}}h(f(x))$. Finally by the given condition we have $g_n(h(x_n))\xrightarrow{\mathcal{I}}g(h(x))$ i.e. $g_n(y_n)\xrightarrow{\mathcal{I}}g(y)$. This proves the theorem.
\end{proof}

\textbf{Acknowledgement.}
The second author is grateful to The Council of  Scientific  and Industrial Research (CSIR), HRDG, India, for the grant of Junior Research Fellowship during the preparation of this paper.


\begin{thebibliography}{99}\baselineskip=20pt
\footnotesize{
\bibitem{Balaz}
V. Baláz, J. Cervenansky, P. Kostyrko and T. Šalát,   $\mathcal{I}$-convergence and $\mathcal{I}$-continuity of real functions, \textit{Acta Math.(Nitra)}, \textbf{5} (2002), 43-50.

\bibitem{Banerjee Banerjee 2016}
A.K. Banerjee and A.  Banerjee, $\mathcal{I}$-convergence classes of sequences and nets in topological spaces, \textit{Jordan J.   Math.   Stat.}, \textbf{11(1)}(2018), 13-31.

\bibitem{Banerjee Banerjee 2018}
A.K. Banerjee and A. Banerjee,  A study on $\mathcal{I}$-Cauchy sequences and $\mathcal{I}$-divergence in $S$-metric spaces, \textit{Malaya J. Mat.}, \textbf{6(2)}(2018), 326-330.

\bibitem{Banerjee Paul}
A.K. Banerjee and  A.  Paul,   $\mathcal{I}$-divergence and $\mathcal{I^*}$-divergence in cone metric spaces, \textit{Asian-Eur. J. Math.}, \textbf{13(8)}(2020), 2050139.

\bibitem{Bukovska}
Z. Bukovská, Quasinormal convergence, \textit{Math. Slovaca}, \textbf{41(2)}(1991), 137-146.

\bibitem{Csaszar Laczkovich 1975}
Á. Császár and M. Laczkovich,  Discrete and equal convergence, \textit{Studia Sci. Math. Hungar}, \textbf{10(3-4)}(1975), 463-472.

\bibitem{Csaszar Laczkovich 1979}
 Á. Császár and M.  Laczkovich,  Some remarks on discrete Baire classes, \textit{Acta Math. Acad. Sci. Hungar.}, \textbf{33(1-2)}(1979), 51-70.



\bibitem{Das Das}
R. Das and T.  Das,   On Convergence of Sequences of real valued functions, \textit{ Int. Journal of Math. Analysis}, \textbf{6}(2012), 783-792.

\bibitem{Das Dutta}
P. Das and S.  Dutta,  On some types of convergence of sequences of functions in ideal context, \textit{Filomat}, \textbf{27(1)}(2013), 157-164.

\bibitem{Das Dutta Pal}
P. Das, S. Dutta and S.K.  Pal,   On $\mathcal{I}$ and $\mathcal{I^*}$-equal convergence and an Egoroff-type theorem, \textit{Mat. Vesnik}, \textbf{66(2)}(2014), 165-177.

\bibitem{Das Kostyrko Wilczynski Malik}
P. Das, P. Kostyrko, W. Wilczyński and P.  Malik, $\mathcal{I}$ and $\mathcal{I^*}$-convergence of double sequences, \textit{Math.    Slovaca}, \textbf{58(5)}(2008), 605-620.

\bibitem{Das Papanastassiou}
R. Das and N.  Papanastassiou,  Some types of convergence of sequences of real valued functions, \textit{Real Anal. Exchange}, \textbf{29(1)}(2004), 43-58.

\bibitem{Fast}
H. Fast,   Sur la convergence statistique, \textit{In Colloq. Math.}, \textbf{2(3-4)}(1951),  241-244.


\bibitem{Kostyrko Salat Wilczynski}
P. Kostyrko, T. Šalát and W.  Wilczyński, $\mathcal{I}$-convergence, \textit{Real Anal. Exchange}, \textbf{26(2)}(2000/01), 669-685.

\bibitem{Lahiri Das}
B.K. Lahiri and P.  Das,  $\mathcal{I}$ and $\mathcal{I^*}$–Convergence of Nets, \textit{Real Anal. Exchange}, \textbf{33(2)}(2008), 431-442.

\bibitem{Papanastassiou 2002}
N. Papanastassiou,   On a new type of convergence of sequences of functions, \textit{Atti Sem. Mat. Fis. Univ. Modena}, \textbf{50(2)}(2002), 493-506.

\bibitem{Papachristodoulos Papanastassiou Wilczynski}
C. Papachristodoulos, N. Papanastassiou and  W. Wilczynski,  $\mathcal{I}$-exhaustive sequences of functions, \textit{Selected papers of the ICTA},(2010).

\bibitem{Steinhaus}
H. Steinhaus,  Sur la convergence ordinaire et la convergence asymptotique,  \textit{ Colloq. Math.},  \textbf{2(1)}(1951),  73-74.


\bibitem{Stoilov}
S. Stoilov,   Continuous convergence,  \textit{Rev. Math. Pures Appl.}, \textbf{4}(1959), 341-344.

\bibitem{Sengupta}
S. Sengupta, Some remarks on ideal equal Baire classes, \textit{Acta Math. Univ. Comenian.}, \textbf{89(1)}(2019), 9-18.

\bibitem{Tian Chen}
C. Tian and G.  Chen,  Chaos of a sequence of maps in a metric space, \textit{Chaos Solitons Fractals}, \textbf{28(4)}(2006), 1067-1075.
}

\end{thebibliography}
\end{document}